\newtheorem{thm}{Theorem}
\newtheorem{conj}[thm]{Conjecture}
\newtheorem{prop}[thm]{Proposition}
\newtheorem{defn}[thm]{Definition}
\newtheorem{lem}[thm]{Lemma}
\newtheorem{fact}[thm]{Fact}
\newtheorem{cor}[thm]{Corollary}
\newtheorem{clm}{Claim}
\renewcommand{\P}{\mathbbm{P}}
\newcommand{\E}{\mathbbm{E}}
\newcommand{\F}{\mathcal{F}}
\newcommand{\G}{\mathcal{G}}
\newcommand{\C}{\mathcal{C}}
\newcommand{\U}{\mathcal{U}}
\newcommand{\CACK}{C_{\ref{ack}}}
\newcommand{\CFKNP}{C_{\ref{fknp}}}
\newcommand{\CMain}{C_{\ref{main}}}
\newcommand{\CConj}{C_{\ref{bestCoarseThreshCol}}}
\newcommand{\kACK}{k_{\ref{ack}}}
\newcommand{\kCH}{k_{\ref{ch}}}
\newcommand{\CSRBM}{C_{\ref{spreadRegBipMatch}}}
\newcommand{\paren}[1]{\left(#1\right)}
\title{Palette Sparsification
via FKNP}
\author{Vikrant Ashvinkumar and Charles Kenney}
\begin{document}

\thanks{We thank Jeff Kahn
for offering comments on an earlier draft.
We thank Jeff Kahn,
Quentin Dubroff,
Natasha Ter-Saakov, and Milan Haiman for
their helpful discussions. Milan 
pointed us toward
Proposition \ref{slackSpread}.}
\begin{abstract}
A random set $S$ is \emph{$p$-spread} if,
for all sets $T$,
$$\P(S \supseteq T) \leq p^{|T|}.$$
There is
a constant $C>1$ large enough that
for every graph $G$ with
maximum degree $D$, there is a $C/D$-spread
distribution on $(D+1)$-colorings of $G$.
Making use of a 
connection between thresholds and spread distributions
due to Frankston, Kahn, Narayanan, and Park
\cite{FKNP}, a
palette sparsification theorem of Assadi,
Chen, and Khanna \cite{ACK} follows.
\end{abstract}
\maketitle
\thispagestyle{empty}
\section{Introduction}\label{Introduction}

In 2018, Assadi, Chen, and Khanna \cite{ACK} proved the following
beautiful and seminal result.

\begin{thm}\label{ack} (\cite{ACK})
Let $G = (V,E)$ be a graph on $n$ vertices
with maximum degree $D$.
There is a constant $\CACK>1$ such that the following holds.
If $\kACK \geq \CACK \log n$ and 
$\forall v \in V,$ $L(v) \in {[D+1] \choose \kACK}$
is chosen uniformly at random and independently, then
$$\P(G \text{ is } L \text{-colorable}) = 1-o_{n \to \infty}(1).$$
\end{thm}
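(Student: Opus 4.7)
The plan is to encode $(D+1)$-colorings of $G$ as subsets of the ground set $X := V \times [D+1]$ and apply the Frankston--Kahn--Narayanan--Park (FKNP) threshold theorem. Define $\F \subseteq 2^X$ to consist of all $R$ such that $G$ admits a proper $(D+1)$-coloring $f$ with $(v, f(v)) \in R$ for every $v \in V$; then $\F$ is an up-set whose minimal elements are precisely the proper $(D+1)$-colorings of $G$, each of size exactly $n$. Since $G$ is $L$-colorable exactly when $R_L := \bigcup_v \{v\} \times L(v)$ lies in $\F$, the theorem reduces to showing $\P(R_L \in \F) = 1 - o(1)$.

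Next I would invoke the $C/D$-spread distribution on proper $(D+1)$-colorings promised in the abstract; this is a $C/D$-spread probability measure supported on the minimal elements of $\F$. Since those minimal elements have size $n$, FKNP then yields a constant $\CFKNP$ such that independent Bernoulli sampling on $X$ with inclusion probability $q := \CFKNP (C/D) \log n$ puts the resulting random subset $R^*$ into $\F$ with probability $1 - o(1)$. Here the relevant $\log \ell$ factor equals $\log n$ because $\ell = n$.

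Finally, I would convert Bernoulli-$q$ sampling on $X$ into the uniform size-$\kACK$ list model of the statement. Choosing $\CACK$ to be a sufficiently large multiple of $\CFKNP \cdot C$, the hypothesis $\kACK \geq \CACK \log n$ ensures $\kACK/(D+1)$ comfortably exceeds $q$, so a standard two-round exposure coupling between product Bernoulli and product uniform-size distributions (alternatively, re-running the FKNP argument directly for the product-uniform distribution, which only needs minor modifications) gives $\P(R_L \in \F) \geq \P(R^* \in \F) - o(1) = 1 - o(1)$.

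The main obstacle is not in the above reduction but in establishing the $C/D$-spread distribution on proper $(D+1)$-colorings itself; once that structural ingredient is in hand, the FKNP invocation and the sampling-model conversion are essentially routine in this area.
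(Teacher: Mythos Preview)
Your proposal is correct and follows essentially the same route as the paper: encode colorings as an $n$-uniform hypergraph on $V\times[D+1]$, invoke Theorem~\ref{main} for the $O(1/D)$-spread measure, apply Theorem~\ref{fknp} with $\ell=n$, and then pass from Bernoulli-$q$ sampling to the fixed-size list model by a coupling. The paper spells out that last step explicitly---using Proposition~\ref{tailConc} (Chernoff) to show each $|X_q\cap(\{v\}\times[D+1])|\le \CACK\log n$ with probability $1-n^{-9}$ and then embedding $X_q$ inside $R_L$ vertex by vertex---which is exactly the ``standard two-round exposure'' you gesture at.
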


A random set $S$ is $p$\emph{-spread} if, for all sets $T$,
\begin{align}
\label{spreadDef}
\P(S \supseteq T) \leq p^{|T|}.
\end{align}
If $\F$ is a set system (hypergraph) and $\P(S \in \F) = 1$,
we say $\F$ \emph{supports} the random set $S$.
If, moreoveer, $S$ is $p$-spread, we say $\F$
\emph{supports a $p$-spread distribution}. 
We say a hypergraph $\F$ is $\ell$\emph{-bounded}
if $\forall A \in \F$, $|A| \leq \ell$,
and $\ell$\emph{-uniform}
if equality ($|A|=\ell$) holds for all $A \in \F$.
If $\F$ is a hypergraph with groundset $X$, we write
$$\hat{\F} = \{A \in 2^X : \exists B \in \F, B \subseteq A\}.$$
Proving a fractional relaxation
of the Kahn-Kalai conjecture
(conjectured---the relaxation, that is---by Talagrand \cite{Tal}), 
Frankston, Kahn, Narayanan, and Park \cite{FKNP}
gave us the following powerful tool.

\begin{thm}\label{fknp} (\cite{FKNP})
There is a constant $\CFKNP>1$ such that the following holds.
Let $\F$ be an $\ell$-bounded hypergraph on $X$ that supports a $p$-spread distribution.
Let $q = \CFKNP p \log \ell$.
Then
$$\P(X_q \in \hat{\F}) = 1 - o_{\ell \to \infty}(1).$$
\end{thm}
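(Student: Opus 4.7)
The plan is to prove Theorem \ref{fknp} via an iterative ``bootstrapping'' strategy. I would write $q = \CFKNP p \log \ell$ as the union of $L = \lceil \log_2 \ell \rceil$ independent layers $Y_1, \ldots, Y_L \subseteq X$, each at rate $q_0 = \Theta(p)$, and argue that each layer reduces the ``effective uniformity'' of the family still to be covered by a factor of two: after processing $Y_1, \ldots, Y_i$, I would maintain an induced hypergraph on the uncovered ground set that is still $p$-spread and $(\ell/2^i)$-bounded. After all $L$ layers the residual uniformity drops to $O(1)$, and a final exposure covers a set with probability $1 - o(1)$.

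The technical heart is a one-step reduction lemma: given an $\ell'$-bounded, $p$-spread family $\F'$ on ground set $X'$ with distribution $\mu$, and an independent $q_0$-random $Y \subseteq X'$ with $q_0 \geq Cp$, produce --- with probability $1 - O(1/\log \ell)$ over $Y$ --- a subfamily $\F^\ast \subseteq \F'$ together with a residue family $\F'' = \{A \setminus Y : A \in \F^\ast\}$ that is $(\ell'/2)$-bounded on $X' \setminus Y$ and still supports a $p$-spread distribution. The natural construction is to sample $A \sim \mu$, condition on $|A \cap Y| \geq |A|/2$ (which holds with positive probability once $C$ is large enough), and push forward to $A \setminus Y$.

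The main obstacle is verifying the spread property of this push-forward. Conditioning on $|A \cap Y| \geq |A|/2$ reweights $\mu$ in favor of sets heavily absorbed by $Y$, and one must show that for every test set $T \subseteq X' \setminus Y$, the conditional probability that a residue contains $T$ remains bounded by $p^{|T|}$. This is the crux of FKNP: the excess concentration on $T$ is charged, via an encoding / double-counting scheme, to the many ways of situating the ``absorbed half'' $A \cap Y$ inside the ambient random set $Y$. Getting the exchange rate right so that spread is \emph{preserved} at every round (rather than degrading by a factor growing with $\ell$) is the delicate combinatorial step, and it is what forces the $\log \ell$ factor in the statement.

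With the one-step lemma in hand, iterating $L$ times and union-bounding the $O(1/\log \ell)$ per-round failure probabilities yields $\P(X_q \in \hat{\F}) \geq 1 - L \cdot O(1/\log \ell)$, which by choosing $\CFKNP$ sufficiently large can be made $1 - o_{\ell \to \infty}(1)$.
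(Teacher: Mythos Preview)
The paper does not prove Theorem~\ref{fknp}: it is stated with citation to \cite{FKNP} and immediately closed with \qed, i.e.\ it is quoted as a black box from the literature and used as a tool. So there is no ``paper's own proof'' to compare against; your proposal is an attempt to reprove the FKNP result itself, which is outside the scope of this paper.

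On the merits of your sketch as a proof of FKNP: the high-level architecture (expose $X_q$ in $\Theta(\log\ell)$ layers, halve the residual uniformity at each layer) is indeed the FKNP strategy, but two points need repair. First, the one-step construction you describe---sample $A\sim\mu$, condition on $|A\cap Y|\ge |A|/2$, push forward to $A\setminus Y$---does \emph{not} obviously preserve $p$-spread on the residue; the conditioning biases toward sets whose uncovered part is correlated with $Y$'s complement, and the actual FKNP argument instead selects, for each $A$, a \emph{minimum} fragment (a containment-minimal $T$ with $\mu(\{A':A'\supseteq T\})$ large relative to $p^{|T|}$) and uses an encoding/counting of such fragments against subsets of $Y$. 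Your description acknowledges this is ``the crux'' but does not supply the mechanism. Second, your accounting $1 - L\cdot O(1/\log\ell)$ with $L=\Theta(\log\ell)$ gives only $1-O(1)$, not $1-o(1)$; you need the per-round failure to be $o(1/\log\ell)$ (which the real argument achieves, but not automatically from ``$\CFKNP$ large'').
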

\qed

Theorem \ref{fknp} is a
user-friendly way of proving thresholds.
In 2022, with a short and elegant proof,
Park and Pham \cite{PP} established the full Kahn-Kalai
conjecture. Several authors have since
published their own spins on the proof,
most notably (Bryan) Park and Vondr\'ak \cite{PV},
who extended the theorem to cover non-uniform measures.
All applications of these results known to the present
authors can be
derived from Theorem \ref{fknp} alone.

If $S$ is a random subset of $X$ and $\overset{\to}{p} \in [0,1]^X$,
we generalize (\ref{spreadDef}) and say
$S$ is $\overset{\to}{p}$\emph{-spread}
if for all $A \subseteq X$,
$$\P(S \supseteq A) \leq \prod_{x \in A} p_x.$$
Say $\G$ \emph{covers} $\F$ if
$$\F \subseteq \hat{\G}.$$

\begin{defn}\label{costExpense}
Let $\F$ be a hypergraph on $X$ and $\overset{\to}{q} \in [0,1]^X$.
The $\overset{\to}{q}$\emph{-expense} of $\F$ is
$$e_{\overset{\to}{q}}(\F) = \sum_{A \in \F} \prod_{x \in A} q_x.$$
The $\overset{\to}{q}$\emph{-cost} of $\F$ is
$$c_{\overset{\to}{q}}(\F) = \min_{\F \subseteq \hat{G}} (e_{\overset{\to}{q}}(\G)),$$
where the minimum is taken over all hypergraphs $\G$ that cover $\F$.
\end{defn}

\begin{thm}\label{pv} (\cite{PV})
There is a constant $C>1$ such that the following
holds. Let $\F$ be an $\ell$-bounded hypergraph on
$X$, $\overset{\to}{q} \in (0,1)^X$, 
and $\overset{\to}{p} = C \log(\ell) \cdot \overset{\to}{q}$.
If the $\overset{\to}{q}$-cost of $\F$ is at least 1, then
\begin{align}\label{pvAsympt}
\P(X_{\overset{\to}{p}} \in \hat{\F}) = 1-o_{\ell \to \infty}(1).
\end{align}
In particular, (\ref{pvAsympt}) holds if $\F$ supports
a $\overset{\to}{q}$-spread distribution.\footnote{
Aiming to optimize the constant $C$, Park and Vondr\'ak
state a slightly different result, but the theorem stated here
follows from their proof.
}
\end{thm}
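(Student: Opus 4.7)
The plan is in two parts. First, I note that the ``in particular'' clause reduces to the main claim: if $\mu$ is a $\overset{\to}{q}$-spread distribution on $\F$, then for any cover $\G$ of $\F$, a union bound and the spread hypothesis give
\[ 1 = \P(\mu \in \F) \leq \sum_{A \in \G} \P(\mu \supseteq A) \leq \sum_{A \in \G} \prod_{x \in A} q_x = e_{\overset{\to}{q}}(\G), \]
so $c_{\overset{\to}{q}}(\F) \geq 1$.

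For the main claim, I would follow the Park--Pham bootstrap scheme in its weighted form. Decompose $X_{\overset{\to}{p}}$ as an independent disjoint union $R_1 \sqcup \cdots \sqcup R_T$ with $T \asymp \log \ell$ rounds, each $R_t$ being $\overset{\to}{r}$-random for some $\overset{\to}{r}$ a constant multiple of $\overset{\to}{q}$, chosen so that $\bigcup_t R_t$ stochastically dominates $X_{\overset{\to}{p}}$. At round $t$, maintain a residual family $\F_t = \{B \setminus (R_1 \cup \cdots \cup R_t) : B \in \F\}$; the target is that with high probability, $\emptyset \in \F_T$, which is equivalent to $X_{\overset{\to}{p}} \in \hat{\F}$.

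The inductive step is a \emph{fragmentation lemma}: conditional on $R_1, \ldots, R_t$ with $\emptyset \notin \F_t$, one of two things happens with high probability over $R_{t+1}$ --- either $\emptyset \in \F_{t+1}$ (success), or the effective $\overset{\to}{q}$-cost of $\F_{t+1}$, measured in terms of the minimum weighted cover of those residuals whose expense exceeds a geometrically decaying threshold $\epsilon_t$, remains bounded below by a constant. Iterating, after $T \asymp \log \ell$ rounds the threshold has decayed enough that the cost-$\geq 1$ hypothesis forces some residual down to $\emptyset$.

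The main obstacle is the fragmentation lemma itself. In the uniform Park--Pham setting each round halves the cardinality of the residuals, and since every removed element contributes the same factor $p$, this automatically halves the expense, so the analysis is clean. Here the weights $q_x$ are heterogeneous, so a few heavy elements can dominate $\prod_{x \in B} q_x$ and ``halving the cardinality'' need not halve the expense. The fix is to track the weighted expense $\prod_{x \in B'} q_x$ directly, and to select which residuals to retain based on a weighted rather than cardinality budget --- precisely the technical content Park and Vondr\'ak added on top of Park--Pham. Adapting that weighted greedy update here, while ensuring the cost-$\geq 1$ invariant propagates through each round, would be the bulk of the work.
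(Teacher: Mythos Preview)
The paper does not prove this theorem: it is stated as a black-box citation of \cite{PV} and immediately followed by \qed. There is no argument in the paper to compare against.

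Your reduction for the ``in particular'' clause is correct and is the standard one-line observation: any cover $\G$ of $\F$ satisfies $1 = \P(\mu \in \F) \le \sum_{A \in \G} \P(\mu \supseteq A) \le e_{\overset{\to}{q}}(\G)$ by the spread hypothesis, whence $c_{\overset{\to}{q}}(\F) \ge 1$. Your sketch for the main claim is a faithful high-level outline of the Park--Vondr\'ak weighted adaptation of the Park--Pham bootstrap; as a plan it is sound, and you have correctly identified that the nontrivial content is the weighted fragmentation step, where tracking cardinality no longer suffices and one must control the product weight directly. But you have only described the shape of that lemma, not proved it, so what you have is a roadmap rather than a proof. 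Since the paper itself defers entirely to \cite{PV}, your proposal already exceeds what the paper attempts; to turn it into a self-contained proof you would need to carry out the weighted greedy/cover argument in full, which is essentially reproducing \cite{PV}.
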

\qed

\noindent
Here $X_{\overset{\to}{p}}$ denotes the random subset
of $X$ given by $\P(x \in X) = p_x$,
independently for all $x \in X$.

We digress to discuss a related area whose methods have
proven useful in this work.
The following conjecture of Casselgren
and H\"aggkvist \cite{CH},
made after 
Johansson \cite{Joh} asked for a threshold, 
has recently been proved
by a flurry of papers:
 
\begin{conj}\label{ch} (\cite{CH})
Let $D$ be an even natural number and
$G = \mathcal{L}(K_{\frac{D}{2}+1, \frac{D}{2}+1})$
the line graph of a complete bipartite graph.
Note that $G$ has $n = (\frac{D}{2}+1)^2$
vertices and maximum degree $D$.

There is a constant $C$ such that
if $\kCH \geq C \log n$, the following holds.
Suppose that for each $v \in V(G)$,
$L(v) \in {[\frac{D}{2}+1] \choose \kCH}$
is chosen uniformly at random, independently
of other choices.
Then
$$\P(G \text{ is } L\text{-colorable}) = 1 - o_{n \to \infty}(1).$$
\end{conj}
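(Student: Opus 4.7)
The plan is to reduce Conjecture \ref{ch} to producing an $O(1/m)$-spread distribution on proper $m$-edge-colorings of $K_{m,m}$, where $m = \frac{D}{2}+1$, and then invoke Theorem \ref{pv}. Identifying $V(G)$ with $E(K_{m,m})$, a proper $m$-coloring of $G = \mathcal{L}(K_{m,m})$ is exactly a Latin square of order $m$. Take ground set $X = E(K_{m,m}) \times [m]$ and let $\F$ be the $m^2$-uniform hypergraph whose edges are the sets $\{(e, \phi(e)) : e \in E(K_{m,m})\}$ arising from Latin squares $\phi$. Choosing each $L(v) \in \binom{[m]}{\kCH}$ uniformly and independently corresponds, up to a standard coupling between fixed-size and Bernoulli sampling, to including each $(e,c)$ in $X_{\vec p}$ with $p_{(e,c)} = \kCH / m$. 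Thus if $\F$ supports a $\vec q$-spread distribution with $q_{(e,c)} = C'/m$ for some absolute constant $C'$, Theorem \ref{pv} yields $\P(X_{\vec p} \in \hat\F) = 1 - o(1)$ whenever $p_{(e,c)} \geq C' \log(m^2) / m$, which is exactly the conclusion of the conjecture for $\kCH \geq C \log n$ with $C$ large enough.

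The task then reduces to producing a distribution on Latin squares $\mathbf{\Phi}$ of order $m$ such that for every set $T = \{(e_i, c_i)\}_{i=1}^{t}$ of (edge, color) pairs,
\[
\P\bigl(\mathbf{\Phi}(e_i) = c_i \text{ for every } i\bigr) \leq (C'/m)^{t}.
\]
A natural attack---signposted by the paper's acknowledgement of Proposition \ref{slackSpread}---is a two-stage construction. First, sample a random partial Latin square $\mathbf{\Psi}$ of slightly sub-maximal density (missing, say, an $\varepsilon$-fraction of cells) whose entries are $O(1/m)$-spread on the filled region; here the slack-spread principle earns its keep, converting easy marginal bounds for a structured sampler (a random partial proper edge-coloring, or a biased sampler over partial Latin squares) into the genuine spread estimate. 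Second, extend $\mathbf{\Psi}$ to a full Latin square via a matching-theoretic argument on the residual (near-regular bipartite) graph of free cells and colors, using an absorbing or K\"onig-type completion step to control how the extension interacts with any prescribed set $T$.

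The main obstacle is the completion step together with the correlation between the two stages: the entries of $T$ that end up being placed in the extension rather than in $\mathbf{\Psi}$ must be carefully charged against the slack built up in stage one, as any multiplicative loss would defeat the $(C'/m)^{t}$ target. This is presumably where recent spread-matching technology for near-regular bipartite graphs is invoked, so that each free cell's completion is smoothed out over $\Theta(m)$ admissible colors. Once the $O(1/m)$-spread distribution on Latin squares is in hand, applying Theorem \ref{pv} with $\vec q = (C'/m) \mathbf{1}$ (using that every $A \in \F$ has $|A| = m^2$, so $\log \ell = \Theta(\log n)$) closes the argument; the remaining items---equivalence of uniform $k$-subset and Bernoulli sampling, absorbed constants in the completion step, and so on---are routine.
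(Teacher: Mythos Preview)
The paper does not prove Conjecture~\ref{ch} itself. It is quoted as a result of Keevash~\cite{Kee} and Jain--Pham~\cite{JP}, packaged as Theorem~\ref{keejp}: there is an $O(1/D)$-spread distribution on $[\tfrac{D}{2}+1]$-colorings of $\mathcal{L}(K_{\frac{D}{2}+1,\frac{D}{2}+1})$, and the conjecture then follows from Theorem~\ref{fknp}. Your high-level reduction---identify colorings with Latin squares, exhibit an $O(1/m)$-spread distribution, apply a threshold theorem---matches this outline exactly, and the coupling between fixed-size lists and Bernoulli sampling is routine (the paper carries out the analogous step in Section~\ref{BasicsNotation} for Theorem~\ref{main}). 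Since the spread here is uniform, Theorem~\ref{fknp} already suffices; Theorem~\ref{pv} is not needed.

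The gap is that you have not actually supplied the spread distribution; you have sketched a wish for one, and the sketch leans on the wrong tool. Proposition~\ref{slackSpread} requires $|S_v| \geq (1+\lambda)\,d(v)$, but in $G = \mathcal{L}(K_{m,m})$ we have $|S_v| = m$ while $d_G(v) = 2(m-1)$, so the hypothesis fails by a factor of roughly $2$, not by $o(1)$. Passing to a ``slightly sub-maximal'' partial Latin square does not create multiplicative slack in the list sizes; it only thins the degrees additively. The constructions in~\cite{Kee} and~\cite{JP} that establish Theorem~\ref{keejp} are substantially more delicate (iterated absorption, permanent-type counting, careful control of the residual when completing), and none of this paper's own tools---Proposition~\ref{slackSpread}, Theorem~\ref{spreadLLL}, Lemma~\ref{spreadMatch}---can stand in for them. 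Indeed Theorem~\ref{main}, the paper's main result, only produces spread colorings from a palette of size $D+1$, twice what Conjecture~\ref{ch} allows; closing that factor of two is precisely the content of~\cite{Kee,JP}.
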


Conjecture \ref{ch} differs from
Theorem \ref{ack} in two important ways: first,
there is more structure (we are given the line graph
of a bipartite graph, instead of an arbitrary
graph); and second, the palette of colors allowed,
$[\frac{D}{2}+1]$ instead of $[D+1]$, is much more restrictive.

Andr\'en, Casselgren, and \"Ohman \cite{ACO}
made the first progress toward Conjecture
\ref{ch} in 2012 without the help of Theorem \ref{fknp},
proving that $\kCH = (1 - \Omega(1))\sqrt{n} 
= (\frac{1}{2} - \Omega(1))D$ suffices.

Nearly ten years later in 
April of 2022, Sah, Sawhney, and Simkin \cite{SSS},
using (as would all subsequent work on this problem) 
Theorem \ref{fknp}, reduced $\kCH$ to $n^{o(1)}$.
In June 2022, Kang, Kelly, K\"uhn, Methuku, and Osthus
\cite{KKKMO}
brought $\kCH$ down to $O(\log^2 n)$, within a logarithmic factor
of the optimum.

If $\F$ supports a $p$-spread distribution and $p=O(q)$,
we say \emph{there is an $O(q)$-spread distribution on $\F$}.
Finally, in December 2022, Keevash \cite{Kee}
and Jain and Pham \cite{JP} independently proved:

\begin{thm}\label{keejp} (\cite{Kee} and \cite{JP})
There is an $O(1/D)$-spread distribution on
$[\frac{D}{2}+1]$-colorings of 
$\mathcal{L}(K_{\frac{D}{2}+1, \frac{D}{2}+1})$. Therefore,
by Theorem \ref{fknp},
Conjecture \ref{ch} is true.
\end{thm}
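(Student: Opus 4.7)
I would first identify an $[n]$-coloring of $\mathcal{L}(K_{n,n})$, where $n = D/2+1$, with an $n \times n$ Latin square: one side of $K_{n,n}$ indexes rows, the other columns, and the $n$ color classes index symbols. A partial proper coloring is then a partial Latin square $T$, and the task becomes exhibiting a distribution $\mu$ on Latin squares with $\mu(Y \supseteq T) \leq (C/n)^{|T|}$ uniformly in $T$. Since $n = \Theta(D)$, this is $O(1/D)$-spread, and Theorem \ref{fknp} applied with $\ell = n^2$ (the number of cells) then yields Conjecture \ref{ch}.

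The natural candidate is the uniform distribution on Latin squares, analyzed row by row. Conditionally on the first $i-1$ rows, the $i$-th row must be a perfect matching in the column-symbol bipartite graph $H_i$ of still-available pairs, which a standard double-count shows is $(n-i+1)$-regular. A classical consequence of Br\'egman's upper bound and the Van der Waerden--Egorychev--Falikman / Schrijver lower bound on permanents of $d$-regular bipartite graphs is that the \emph{uniform} distribution on perfect matchings of such a graph is $(C/d)$-spread. Combined with the invariance of the uniform distribution on Latin squares under row-permutations (letting us sample heavy rows of $T$ first) and a Chebyshev-sum rearrangement applied to the decreasing sequence $m_r$ (entries of $T$ in row $r$) against the increasing sequence $\log(n/(n-i+1))$, the per-row bounds would chain into the desired $(C/n)^{|T|}$.

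The difficulty lies in a single but catastrophic point: the conditional distribution of the $i$-th row of a \emph{uniform} Latin square, given the first $i-1$ rows, is \emph{not} uniform on perfect matchings of $H_i$; it is weighted by the number of completions to a full Latin square, and this weight can vary wildly across matchings. Overcoming this is where Keevash and Jain--Pham diverge. Keevash builds the Latin square by iterative absorption for designs, using absorbing blocks that can swallow small partial configurations while preserving spread; Jain and Pham construct a non-uniform but still $O(1/n)$-spread distribution via a block-by-block coupling in which each block is drawn from a carefully weighted distribution and spread is verified by switching arguments. Either construction plugs into the outline at precisely the point where uniformity fails, and the rest of the argument proceeds as sketched.
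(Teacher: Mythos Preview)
The paper does not give a proof of this statement: it is stated as a result of \cite{Kee} and \cite{JP} and closed immediately with a \qed. There is therefore no ``paper's own proof'' to compare against; the theorem functions purely as background in the introduction.

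Your proposal is not a proof either, and you are explicit about this: you correctly reformulate the problem as finding an $O(1/n)$-spread distribution on $n \times n$ Latin squares, sketch the natural row-by-row permanent-bound approach, pinpoint exactly where it breaks (the conditional law of row $i$ given the previous rows is weighted by completion counts, not uniform on matchings of $H_i$), and then defer the repair to \cite{Kee} and \cite{JP}. That is an honest and largely accurate summary of the landscape, and it is more than the present paper offers. One caution: your capsule descriptions of the two constructions are a bit loose---Jain and Pham's argument in \cite{JP} leans on the spread Lov\'asz Local Lemma (the same tool quoted here as Theorem~\ref{spreadLLL}) together with entropy/permanent estimates rather than a ``block-by-block coupling with switching,'' and Keevash's argument, while absorption-flavored, is not literally the iterative-absorption-for-designs machinery. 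If you intend to cite these as black boxes, that is fine and matches the paper; if you intend your paragraph to stand as an outline of either proof, it would need tightening.
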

\qed

The main result of the present work is as follows:

\begin{thm}\label{main}
There is a $\CMain > 1$ large enough that the following holds.
Let $G$ be any graph on $n$ vertices with
maximum degree $D$. Then there is a
$\CMain/D$-spread distribution on $[D+1]$-colorings
of $G$. Theorem \ref{ack} follows by Theorem \ref{fknp}.
\end{thm}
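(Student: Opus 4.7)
Viewing proper $(D+1)$-colorings as the hypergraph $\F$ of $n$-element subsets of $X:=V(G)\times[D+1]$ corresponding to valid colorings, the ``Theorem \ref{ack} follows by Theorem \ref{fknp}'' half is immediate: $\F$ is $n$-bounded, and a $\CMain/D$-spread distribution on $\F$ feeds into Theorem \ref{fknp} with $p=\CMain/D$ and $\ell=n$ to produce $q=\Theta(\log n/D)$, matching the $\CACK\log n$ palette size in Theorem \ref{ack}. All the work is therefore in exhibiting a $\CMain/D$-spread random proper coloring $\chi$: for every partial assignment $T=\{(v_1,c_1),\dots,(v_k,c_k)\}$ with distinct $v_i$, the plan is to show
\[\P(\chi(v_i)=c_i\text{ for all }i)\le(\CMain/D)^k.\]

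\textbf{The random coloring.} Take $\chi$ to be a random greedy coloring: sample a uniformly random permutation $\pi$ of $V(G)$ and process vertices in this order, at each step assigning $v$ a color drawn uniformly from the set $A_v=A_v(\pi)\subseteq[D+1]$ of colors not yet used on $v$'s earlier neighbors. Since $|A_v|\ge 1$, the output is automatically a proper $(D+1)$-coloring. Under random $\pi$ each neighbor of $v$ appears before $v$ with probability $\tfrac12$, so $\E|A_v|\ge D/2+1$ and $|A_v|=\Omega(D)$ on an event of constant probability; consequently the marginal
\[\P(\chi(v)=c)=\E_\pi\bigl[\mathbbm{1}\{c\in A_v(\pi)\}/|A_v(\pi)|\bigr]\]
is $O(1/D)$, which is the $k=1$ instance of the spread bound.

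\textbf{Spread and main obstacle.} For general $k$, exposing $\pi$ gives
\[\P(\chi\supseteq T)=\E_\pi\Bigl[\prod_{i=1}^k\mathbbm{1}\{c_i\in A_{v_i}(\pi)\}/|A_{v_i}(\pi)|\Bigr],\]
and the main obstacle is that adversarial orderings can drive individual $|A_{v_i}|$ down to $1$ while simultaneously introducing correlations when some $v_i,v_j$ are adjacent or share forbidden colors. The plan is to condition on the restriction of $\pi$ to $\{v_1,\dots,v_k\}\cup N(\{v_1,\dots,v_k\})$, so that the rest of $\pi$ becomes a uniform completion and the palettes $A_{v_i}$ acquire a controllable joint structure, and then to invoke Proposition \ref{slackSpread} --- the slack-to-spread tool Milan pointed us toward --- which I expect to formalize the intuition that a coloring built by drawing each vertex's color uniformly from a random palette of expected size $\Omega(D)$ is $O(1/D)$-spread. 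Making this reduction quantitative is the heart of the argument; once it yields the spread with a sufficiently large $\CMain$, Theorem \ref{main} is established, and Theorem \ref{ack} then follows from Theorem \ref{fknp} as indicated above.
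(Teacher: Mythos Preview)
Your reduction of Theorem~\ref{ack} to Theorem~\ref{fknp} is fine, but the heart of your plan---taking $\chi$ to be the random-greedy coloring under a uniform vertex order---cannot work: the paper's own \S\ref{Warmups} exhibits graphs on which this distribution is provably \emph{not} $O(1/D)$-spread. Take $G=K_{D,D}$ with bipartition $([D],[2D]\setminus[D])$ and let $\tau$ be the coloring sending $i\mapsto i$ for $i\in[D]$ and $j\mapsto D+1$ for $j\in[2D]\setminus[D]$. With probability $\binom{2D}{D}^{-1}$ the random order places all of $[D]$ before all of $[2D]\setminus[D]$; conditional on this, each $i\in[D]$ has its full palette and receives color $i$ with probability $1/(D+1)$, after which every vertex on the other side is \emph{forced} to color $D+1$. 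Thus $\P(\chi=\tau)\ge\binom{2D}{D}^{-1}(D+1)^{-D}$, so $\P(\chi=\tau)^{1/(2D)}=\Theta(1/\sqrt{D})$, whereas $O(1/D)$-spread would require $\P(\chi=\tau)^{1/(2D)}=O(1/D)$. No amount of conditioning on $\pi$ restricted to $\{v_i\}\cup N(\{v_i\})$ rescues this: the obstruction is not a correlation artifact but the fact that a positive-probability event forces $D$ vertices simultaneously, and Proposition~\ref{slackSpread} only applies once every vertex already has $\Omega(D)$ slack, which is exactly what fails here.

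The paper's actual construction is substantially more involved. It first passes to a $D$-regular supergraph, then applies the Assadi--Chen--Khanna sparse--dense decomposition (Theorem~\ref{ackSparseDenseDecomp}). Sparse vertices are handled by a two-round scheme: an independent uniform trial coloring, conditioned via the spread Lov\'asz Local Lemma (Theorem~\ref{spreadLLL}) on concentration events from Theorem~\ref{martConsequence}, creates enough repeated colors in each sparse neighborhood that the residual instance has genuine multiplicative slack, at which point Proposition~\ref{slackSpread} finishes. Dense clusters---precisely the $K_{D,D}$-like pieces where random-greedy fails---are colored via spread perfect matchings between the cluster and the palette, using Corollary~\ref{highDegSpreadMatch} together with a preliminary pairing process when the cluster has many internal non-edges. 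Fact~\ref{spreadApart} glues the cluster colorings together without accumulating constants. Your proposal skips all of this machinery, and the $K_{D,D}$ example shows that some version of it is unavoidable.
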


Answering a question of Assadi, Kahn and the second
author \cite{KK1} proved an asymptotically optimal
version of Theorem \ref{ack}, reducing $\kACK$ to
$$\kACK = (1+o(1)) \log n.$$
In \cite{KK2} they also prove the following generalization:

\begin{thm}\label{kk2} (\cite{KK2})
Let $G = (V,E)$ be a graph on $n$ vertices with maximum
degree $D$, and suppose that $\forall v \in V,$ we
are given a list $S_v$ of size $D+1$.
There is a $k = (1+o(1)) \log n$ such that
if $\forall v \in V, L(v) \in {S_v \choose k}$
is chosen uniformly at random and independently,
then
$$\P(G \text{ is } L\text{-colorable}) = 1-o_{n \to \infty}(1).$$
\end{thm}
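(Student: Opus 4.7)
The plan is to reduce Theorem \ref{kk2} to Theorem \ref{pv} by exhibiting a vector-spread distribution on proper list-colorings with sharp constants. Take the ground set $X = \bigsqcup_{v \in V} \{v\} \times S_v$, identify each map $c$ with $\{(v, c(v)) : v \in V\} \subseteq X$, and let $\F$ be the hypergraph whose edges are the proper colorings with $c(v) \in S_v$. The random list $L$ is (up to a standard coupling with the product measure) $X_{\overset{\to}{p}}$ for $p_{(v,s)} = k/(D+1)$. Thus, if one produces a distribution on $\F$ that is $\overset{\to}{q}$-spread with $q_{(v,s)} = (1+o(1))/(D+1)$, a sharp form of Theorem \ref{pv} --- with leading constant $1+o(1)$, in the style of \cite{KK1} --- yields the desired $k = (1+o(1))\log n$.

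The heart of the proof is constructing this sharp $\overset{\to}{q}$-spread distribution. Theorem \ref{main} already gives an $O(1/D)$-spread distribution when the palette is $[D+1]$, and one might hope to transport it through arbitrary bijections $\phi_v : S_v \to [D+1]$. The obstruction is that these bijections do not respect adjacency, so the proper-coloring constraint pulls back to an altered constraint on $[D+1]$-colorings that is not handled by Theorem \ref{main}. I would instead rework the construction of Theorem \ref{main} directly in the list-coloring setting, replacing the global symmetry of $[D+1]$ by local symmetry at each vertex: in a random greedy or nibble-style process, the color assigned to $v$ is drawn uniformly from the available subset of $S_v$ at that step, and conditional marginals are tracked along the process to witness $q_{(v,s)} = (1+o(1))/(D+1)$.

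The principal obstacle is driving the constant down to exactly $1+o(1)$, rather than merely $O(1)$. Two sources of slack must be controlled simultaneously: the multiplicative deviation of the marginals of the spread distribution from the uniform $1/(D+1)$, and the loss incurred when passing from spread to thresholds. The latter cannot be absorbed by using Theorem \ref{pv} as a black box, since its leading constant is a fixed $C > 1$; one has to inline a tighter coupling between the spread distribution and $L$, in the spirit of \cite{KK1}, to avoid this generic loss. I expect the inlined coupling, together with the combinatorial estimate underlying near-uniformity of higher-order marginals on proper list-colorings, to be the most technical step.
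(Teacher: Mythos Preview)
The paper does not prove Theorem~\ref{kk2}; it is quoted from \cite{KK2} and closed with a bare \qed. More to the point, the very next sentence of the paper reads: ``As far as we know, there is still no way to attain asymptotically optimal results, such as Theorem~\ref{kk2}, from Theorem~\ref{fknp}.'' Your proposal is exactly an attempt to do this, so you are proposing to solve something the authors flag as open.

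The gap is real and you identify it yourself. You need two ingredients: (i) a $(1+o(1))/(D+1)$-spread distribution on $S$-colorings, and (ii) a spread-to-threshold theorem with leading constant $1+o(1)$ rather than some fixed $C>1$. For (i), nothing in the paper's construction comes close: the LLL step, Proposition~\ref{slackSpread} with slack $\lambda=\Theta(\vartheta')$, the cluster matching via Lemma~\ref{spreadMatch} with its $1/\sqrt{z}$, and every invocation of Fact~\ref{spreadTogether} each leak a constant factor, and you give no mechanism to recover them. For (ii), no version of Theorems~\ref{fknp} or~\ref{pv} (or Park--Pham) is known with constant $1+o(1)$. Your fix---``inline a tighter coupling \dots\ in the spirit of \cite{KK1}''---is not a reduction to the spread framework at all; it amounts to abandoning Theorem~\ref{pv} and redoing the bespoke argument of \cite{KK1,KK2}, which is precisely where the entire content of Theorem~\ref{kk2} lives. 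The proposal does not say what that argument is.
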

\qed

As far as we know, there is still no way to attain
asymptotically optimal results, such as Theorem \ref{kk2},
from Theorem \ref{fknp}. On the other hand,
our guess would be that any coarse thresholds 
for random-list coloring 
with lists of size $O(\log n)$
\emph{can} be proven by
Theorem \ref{fknp} and its successors.
For example:

\begin{conj}\label{bestSpreadCol}
Let $G = (V,E)$ be a graph on $n$ vertices, and for each
$v \in V$ suppose we are given a list
$S_v$ of size $d_G(v)+1$. Then there is an
$O(1/d_G(v))$-spread distribution on $S$-colorings of $G$.
\end{conj}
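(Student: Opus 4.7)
The plan is to attack Conjecture \ref{bestSpreadCol} via Theorem \ref{pv}, setting $q_{(v,c)} = C/d_G(v)$ for $c \in S_v$ (and $0$ otherwise) on the ground set $X = \{(v,c) : v \in V, c \in S_v\}$; the task then becomes producing a random proper $S$-coloring $\chi$ whose support in $X$ is $\overset{\to}{q}$-spread, for some absolute constant $C$.

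My first step would be to peel off vertices of relatively small degree by a degeneracy-style argument. Let $V_{\mathrm{low}} = \{v : d_G(v) \leq D/\log D\}$, where $D$ is the maximum degree. Since $|S_v| = d_G(v)+1$, each such vertex has at least one available color regardless of how its neighbors are colored; more quantitatively, one expects a constant fraction of $S_v$ to remain available once the neighbors of $v$ have been randomly coloured. I would therefore define $\chi$ by first sampling a coloring of $V \setminus V_{\mathrm{low}}$ and then extending by a carefully weighted uniform choice over $V_{\mathrm{low}}$. The $\overset{\to}{q}$-spread bound at a low-degree $v$ then reduces to the assertion that, conditioned on any coloring of $V \setminus V_{\mathrm{low}}$, each individual color $c \in S_v$ is used for $v$ with probability $O(1/d_G(v))$.

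On $V \setminus V_{\mathrm{low}}$ the degrees are comparable up to a $\log$ factor, so the problem is morally within striking distance of Theorem \ref{main}. I would adapt the construction there to be palette-aware, drawing $\chi(v)$ from $S_v$ rather than from $[D+1]$; the techniques of \cite{KK2}, which already pass from uniform to list palettes while preserving coupling-style properties, should be the natural starting point. Combined with the vector version of Theorem \ref{pv} applied on the backbone, this would handle $V \setminus V_{\mathrm{low}}$.

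The principal obstacle I anticipate is the interaction between the two stages. The spread bound at a low-degree $v$ requires $O(1/d_G(v))$ behavior of the \emph{conditional} law of $\chi(v)$ given the backbone coloring, and $1/d_G(v)$ can be much larger than $1/D$; thus one needs fine control on the joint distribution of neighbor-colors produced in the high-degree stage, not merely marginal guarantees. Making this quantitative---perhaps by an inductive spread argument across successive degeneracy layers, or via a direct $\overset{\to}{q}$-spread coupling between the high- and low-degree phases---looks like the technical heart of the problem, and is presumably why the authors leave this as a conjecture rather than a theorem.
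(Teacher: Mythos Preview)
This statement is Conjecture~\ref{bestSpreadCol}, which the paper explicitly leaves \emph{open}; there is no proof in the paper to compare your proposal against. Your write-up is accordingly not a proof but a sketch of a strategy, and you yourself flag the main obstacle in the final paragraph. One point of confusion at the outset: Theorem~\ref{pv} derives threshold statements \emph{from} spread distributions, so it is what one would apply \emph{after} proving Conjecture~\ref{bestSpreadCol} (to obtain Conjecture~\ref{bestCoarseThreshCol}), not a tool for establishing the spread distribution itself.

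On the substance of the outline, two gaps are more serious than you indicate. First, the high-degree phase is not close to settled: Theorem~\ref{main} builds a spread distribution only for the \emph{common} palette $[D+1]$, and the cluster-matching argument of Section~\ref{Clusters} uses a single global $\Gamma$ in an essential way. The paper does not prove a spread analogue of Theorem~\ref{kk2}, so ``adapt the construction to be palette-aware using the techniques of \cite{KK2}'' already hides an unresolved problem even when all degrees equal $D$. Second, a single split at $D/\log D$ cannot work in general: degrees may populate every scale in $[1,D]$, and at each scale one faces the same issue you raise for the bottom layer---a vertex $v$ with $|S_v|=d_G(v)+1$ may, after its higher-degree neighbours are coloured, retain far fewer than $\Omega(d_G(v))$ usable colours, destroying the required $O(1/d_G(v))$ bound. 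Your parenthetical about ``successive degeneracy layers'' is closer to what would actually be needed, but controlling conditional spread across many layers simultaneously is exactly the difficulty that keeps this a conjecture.
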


Conjecture \ref{bestSpreadCol}, with Theorem \ref{pv}, would imply
the following, which strengthens a result of Halld\'orson,
Kuhn, Nolin, and Tonoyan \cite{HKNT}
and generalizes a theorem of Alon and Assadi
\cite{AA}:

\begin{conj}\label{bestCoarseThreshCol}
(Let $G$, $S$ be as in Conjecture \ref{bestSpreadCol}.)
There is a $\CConj>1$ large enough that the following holds.
If $\forall v \in V, L(v) \in {S_v \choose \CConj \log n}$
is chosen uniformly at random and independently,
then
$$\P(G \text{ is } L\text{-colorable}) = 1-o_{n \to \infty}(1).$$
\end{conj}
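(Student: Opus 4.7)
The plan is to deduce Conjecture~\ref{bestCoarseThreshCol} from Conjecture~\ref{bestSpreadCol} by applying Theorem~\ref{pv} and then comparing the fixed-size lists $L(v)$ to independent Bernoulli sampling.

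Encode each $S$-coloring $\gamma$ of $G$ as the subset $\{(v,\gamma(v)) : v \in V\}$ of the ground set $X := \{(v,c) : v \in V,\, c \in S_v\}$; the resulting hypergraph $\F$ of $S$-colorings is $n$-uniform, hence $n$-bounded. By Conjecture~\ref{bestSpreadCol}, there is an absolute constant $K$ such that $\F$ supports a $\overset{\to}{q}$-spread distribution with $q_{(v,c)} = K/d_G(v)$. Applying Theorem~\ref{pv} with this $\overset{\to}{q}$ and $\overset{\to}{p} := C\log(n)\overset{\to}{q}$ yields $\P(X_{\overset{\to}{p}} \in \hat{\F}) = 1-o(1)$. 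Writing $L^{*}(v) := \{c \in S_v : (v,c) \in X_{\overset{\to}{p}}\}$, this says the random lists $L^{*}$, in which each $c \in S_v$ is included independently with probability $p_v := CK\log(n)/d_G(v)$, make $G$ properly $L^{*}$-colorable with high probability.

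It remains to couple $L(v)$ and $L^{*}(v)$ so that $L^{*}(v) \subseteq L(v)$ for every $v$ with high probability. Take $\CConj$ sufficiently large relative to $CK$. Since Conjecture~\ref{bestCoarseThreshCol} implicitly requires $|S_v| \geq \CConj \log n$, we have $d_G(v) \geq \CConj \log n - 1$, so $p_v$ is bounded away from $1$. Because every $c \in S_v$ carries the same $L^{*}$-inclusion probability $p_v$, $L^{*}(v)$ is, conditional on $|L^{*}(v)| = j$, uniform over $\binom{S_v}{j}$. Independently for each $v$, then, first draw $L(v)$ uniform in $\binom{S_v}{\CConj \log n}$; next sample $N_v \sim \mathrm{Bin}(|S_v|, p_v)$; if $N_v \leq \CConj \log n$, set $L^{*}(v)$ to be a uniform $N_v$-subset of $L(v)$ (otherwise sample $L^{*}(v)$ freely with the correct marginal). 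A short check---using that a uniform $j$-subset of a uniform $k$-subset of $S_v$ is itself a uniform $j$-subset of $S_v$---verifies that $L^{*}(v)$ has the desired Bernoulli product distribution. A Chernoff bound gives $\P(N_v > \CConj \log n) \leq n^{-\Omega(1)}$, so a union bound forces $L^{*}(v) \subseteq L(v)$ for every $v$ with probability $1-o(1)$; on this event, any proper $L^{*}$-coloring of $G$ is a proper $L$-coloring.

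The main obstacle is of course Conjecture~\ref{bestSpreadCol} itself, which should be substantially harder than Theorem~\ref{main}: in the uniform-palette setting every vertex carries the same spread budget $K/D$, but in the general list setting the budget must scale locally as $K/d_G(v)$, and the randomized procedures that work for uniform palettes do not obviously respect these varying scales. Everything past Conjecture~\ref{bestSpreadCol} in the above is essentially routine.
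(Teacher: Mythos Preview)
Your proposal is correct and matches precisely what the paper intends: the paper does not prove Conjecture~\ref{bestCoarseThreshCol} but asserts that Conjecture~\ref{bestSpreadCol} together with Theorem~\ref{pv} would imply it, and your argument supplies those details. The structure---apply the spread hypothesis to the $n$-bounded coloring hypergraph, invoke Theorem~\ref{pv} to get $L^*$-colorability w.h.p.\ for Bernoulli lists, then couple fixed-size lists to Bernoulli lists via Chernoff and a union bound---is exactly the template the paper spells out in Section~\ref{BasicsNotation} for the implication Theorem~\ref{main} $+$ Theorem~\ref{fknp} $\Rightarrow$ Theorem~\ref{ack}, with Theorem~\ref{pv} replacing Theorem~\ref{fknp} to accommodate the non-uniform $\overset{\to}{q}$.
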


The rest of this paper is organized as follows.
In Section \ref{BasicsNotation}, we lay out
notation and definitions, and briefly prove that Theorem \ref{main}
implies Theorem \ref{ack}.
In Section \ref{Tools} we assemble needed tools,
including a `spread Lov\'asz Local Lemma' due to \cite{HaSS}, q.v.
\cite[Prop. 6]{JP}, and a spread bipartite matching theorem
from \cite{PSSS}. Section \ref{Warmups}
proves some elementary spread coloring theorems.
In Section \ref{Warmups} 
we also provide counterexamples to show that the most-natural 
distributions on $(D+1)$-colorings of $G$---the 
uniform and `random-greedy' distributions---may
\emph{not} be sufficiently spread.
After outlining our construction
(i.e. the proof of Theorem \ref{main})
in Section \ref{Outline},
Section \ref{Sparse}
constructs a spread coloring of the sparse vertices in $V$.
Also in Section \ref{Sparse}, we propose a conjecture
extending the famous Reed-Sudakov theorem to spread colorings.
Finally, in Section \ref{Clusters} the spread coloring is
extended to the dense clusters in $V$.

\section{Basics and Notation}\label{BasicsNotation}
We write $[n]$ for the set $\{1,2,\cdots,n\}$,
$2^X$ for the powerset of $X$, and 
$${X \choose k} = \{A \in 2^X : |A|=k\}.$$
Define $n^{\underline{k}} = n (n-1) \cdots (n-k+1)$.
In a \emph{list coloring instance}, we are
given a graph $G = (V,E)$ and a function
$L$ on $V$, and our task is to produce
a function $\sigma$ on $V$ such that
\begin{itemize}
\item $\forall v \in V, \sigma(v) \in L(v)$, and
\item $\forall uv \in E, \sigma(u) \neq \sigma(v)$.
\end{itemize}
By tradition, $L(v)$ is called the \emph{list} of $v$.
If such a $\sigma$ exists, we 
call $\sigma$ an $L$\emph{-coloring} and
say $G$ is $L$\emph{-colorable}.
We take $|V|=n$, $d(v) = d_G(v) = |\{u \in V : uv \in E\}|$,
and
$$D = \Delta_G = \max_{v \in V} d(v).$$
(We use `$D$' when referring to the maximum
degree of the graph $G$ from Theorem \ref{main}
and `$\Delta_H$' for other graphs,
$H$, that appear in the argument.)
Our \emph{palette} is $\Gamma = [D+1]$
and we refer to elements $\gamma \in \Gamma$ as
\emph{colors}.

We write $G[S]$ for the subgraph of $G$ induced
on the vertex set $S$; for $T \subseteq V \setminus S$,
$G[S,T]$ is the bipartite subgraph of
$G$ induced on the bipartition $(S,T)$. The number
of edges of $G=(V,E)$ is written as
$e(G) = |E|$.
We write $N_v = N_G(v)$ for the neighborhood of
$v$ in $G$, and $\overline{G} = (V, {V \choose 2} \setminus E)$.

Recall from the introduction that, given $\overset{\to}{p} \in [0,1]^X$, we
write $X_{\overset{\to}{p}}$ for the random subset of $X$
given by $\P(x \in X) = p_x$, independently for all $x$ in $X$.
If $\overset{\to}{p} = (p,p,\cdots,p)$ then we abbreviate
$X_{\overset{\to}{p}} = X_p$.

Given $G=(V,E)$ and lists $S = (S_v : v \in V)$, we
form the \emph{coloring hypergraph} $\F = \F(G,S)$ as follows.
The vertex set of $\F$ is
$$X = \bigcup_{v \in V} \{v\} \times S_v,$$
and for $A \subseteq X,$ we put $A \in \F$
iff $A$ is a proper $S$-coloring of $\F$.
Notice that $\F$ is $n$-uniform (and thus $n$-bounded).

Our use of asymptotic notation $o(\cdot), O(\cdot), \Omega(\cdot)$
is standard, with the implicit limit being as $D \to \infty$.
In contrast, `$0 < a \ll b$' means that `there is a suitable 
function $f$ such that $\forall b>0$, if $a \leq f(b)$,
the following holds'. Larger hierarchies $0 < a \ll \cdots \ll z$
are defined analogously.
In expressions such as $O(\varepsilon D)$, where $0 < \varepsilon \ll 1$,
the implied constant in the $O(\cdot)$ does not depend on $\varepsilon$.

We say a sequence of events $E_m$ holds
\emph{with high probability} (w.h.p.)
if $\lim_{m \to \infty} \P(E_m) = 1$.
The parameter $m$ is usually implicit, but
should be clear from context.

We provide a quick proof that
Theorem \ref{main} together with
Theorem \ref{fknp} implies Theorem \ref{ack}.
We will use the following concentration bound
due to Chernoff \cite{Che}.
\begin{prop}\label{tailConc}
If $\xi$ is binomial or hypergeometric with mean $\E \xi \leq \mu$, and $K>1$,
$$\P(\xi \geq K\mu) < e^{-K \mu \log(K/e)}.$$
\end{prop}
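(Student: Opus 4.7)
The plan is a standard Chernoff--Cram\'er exponential-moment bound. For $t > 0$, Markov's inequality applied to $e^{t\xi}$ gives
$$\P(\xi \geq K\mu) \;\leq\; e^{-tK\mu} \cdot \E e^{t\xi},$$
so the key is to bound the moment generating function in the form $\E e^{t\xi} \leq \exp((e^t - 1)\mu)$ and then optimize $t$.

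For the binomial case, writing $\xi = \sum_i X_i$ with independent Bernoulli $X_i$ whose expectations sum to at most $\mu$, independence factors the MGF, and the pointwise inequality $1 + p_i(e^t - 1) \leq \exp(p_i(e^t-1))$ delivers the desired bound. For the hypergeometric case I would invoke Hoeffding's classical reduction---the MGF of a hypergeometric variable is dominated by that of a binomial with the same mean (equivalently, sampling without replacement is MGF-dominated by sampling with replacement)---so the same inequality applies. This is the only step drawing on anything more than calculus, and it is the one place where there is anything at all to worry about.

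Taking the minimizer $t = \log K$ then produces an exponent of $\mu(K - 1) - K\mu \log K = -\mu(K \log K - K + 1)$. Since $K \log(K/e) = K \log K - K$ and $\mu > 0$, this exponent is strictly less than $-K\mu \log(K/e)$, giving the strict inequality as stated. I do not expect any real obstacle; once Hoeffding's MGF dominance is cited, the entire argument is a few lines of routine calculation.
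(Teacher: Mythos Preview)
Your argument is correct and is the standard Chernoff--Cram\'er calculation; the optimization at $t=\log K$ gives exponent $-\mu(K\log K - K + 1)$, which is strictly smaller than $-K\mu\log(K/e)$ since $\mu>0$, and Hoeffding's domination handles the hypergeometric case exactly as you say. The paper itself does not supply a proof at all: the proposition is stated with a citation to Chernoff and closed with a bare \qed, so your write-up simply fills in what the paper leaves as a black box.
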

\qed

\begin{proof} \emph{(Theorem \ref{main} \& Theorem \ref{fknp} $\implies$ 
Theorem \ref{ack})}
For every $v \in V$, set $S_v = [D+1]$, and 
let $\F = \F(G,S)$,
i.e. the set of all $(D+1)$-colorings of $G$.
Recall that the ground set of $\F$ is $X = V \times [D+1]$.
Theorem \ref{main} and Theorem \ref{fknp}
imply that, for $q = \CFKNP \CMain \log n / D$ and
$X_q$ the binomial random subset of $X$,
$$\P(\exists \tau, \text{ a proper } D+1 \text{ coloring of } G \text{, s.t. } \tau \subseteq X_q) = 1-o_{n \to \infty}(1).$$
Without worrying too much about the constant, we take $\CACK = 100 \CFKNP \CMain$.
Let $v \in V$ and note that
$$\xi_v := |X_q \cap (\{v\} \times [D+1])|$$
is binomially distributed with mean less than 
$$\mu = 10 \CFKNP \CMain \log n,$$
so by Proposition \ref{tailConc},
$$\P(\xi_v > \CACK \log n) < n^{-10}.$$
Summing over the $n$ possibilities for $v$, we find that,
for
$$E := \{\exists v \text{ s.t. } \xi_v > \CACK \log n\},$$
$$\P(E) < n^{-9}.$$
We now couple the random lists $(L_v : v \in V)$
with $X_q$ as follows. First choose and condition on
$$(\xi_v : v \in V).$$
If the bad event $E$ holds (note this is determined by
the $\xi_v$'s), then we draw $X_q$ independently of the $L_v$'s.
If $\overline{E}$ holds, then we pick $X_q$ by \emph{first}
drawing $L_v$ for all $v \in V$, and \emph{then}
drawing
$$(X_q \cap (\{v\} \times [D+1])) \in {\{v\} \times L_v \choose \xi_v} \text{ u.a.r. and independently.}$$ 
Now,
\begin{align*}
\P(G \text{ is not } L\text{-colorable}) &\leq \P(E) + \P(\not\exists \tau, \text{ a proper } D+1 \text{ coloring of } G \text{, s.t. } \tau \subseteq X_q) \\
&< n^{-9} + o_{n \to \infty}(1) \\
&= o_{n \to \infty}(1).
\end{align*}
\end{proof}

\section{Tools}\label{Tools}

Following \cite{JP}, suppose $(\tau_i : i \in I)$ is a sequence of independent
random variables and $(E_j : j \in J)$ a sequence of events.
We are given sets $S_j \subseteq I$ for each $j \in J$, 
and we assume that $E_j$ is a function of $(\tau_i : i \in S_j)$.
We say $\Lambda$ is a \emph{dependency graph}
on vertex set $J$ if
$$j \not\sim_{\Lambda} j' \ \implies \  S_j \cap S_{j'} = \emptyset.$$

\begin{thm}\label{spreadLLL} (\cite{HaSS}; see \cite[Prop. 6]{JP})
Denote by $\mu$ the usual product measure on
$(\tau_i : i \in I)$ and by $\bbmu$ the conditional measure
$\mu(\cdot | \cap_j \overline{E_j})$. 
Let $p \in (0,1)$, and let $\Lambda$ be a dependency graph
on $J$.
Assume that
$\forall j \in J, \mu(E_j) \leq p$
and
$$4p\Delta_{\Lambda} \leq 1,$$
where $\Delta_\Lambda$ is the maximum degree of $\Lambda$.
Let $E$ be an event determined by a subset $S \subseteq I$
of the variables $\tau_i$. Let $N = |\{j \in J : S \cap S_j \neq \emptyset\}|.$
Then
$$\bbmu(E) \leq \mu(E)\exp(6pN).$$
\end{thm}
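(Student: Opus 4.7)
The plan is to use Bayes' rule to reduce the inequality to the classical Lov\'asz Local Lemma conditional bound. Assuming $\mu(E)>0$ (else $\bbmu(E)=0$ and we are done), Bayes gives
\begin{align*}
\frac{\bbmu(E)}{\mu(E)} = \frac{\mu(\cap_j \overline{E_j} \mid E)}{\mu(\cap_j \overline{E_j})},
\end{align*}
so it suffices to bound the right-hand ratio by $\exp(6pN)$. Note that the denominator is strictly positive by the qualitative LLL under the hypothesis $4p\Delta_\Lambda\le 1$.

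Next I would partition $J = J_1 \sqcup J_2$, where $J_1 = \{j \in J : S_j \cap S \neq \emptyset\}$ (so $|J_1| = N$) is the set of indices whose bad events share variables with $E$. For $j \in J_2$, the event $E_j$ is a function of $(\tau_i : i \in S_j)$ with $S_j \cap S = \emptyset$, so since $(\tau_i)_{i\in I}$ are independent under $\mu$, the event $\cap_{J_2} \overline{E_j}$ is independent of $E$. Factoring both numerator and denominator by the chain rule---conditioning first on $\cap_{J_2}\overline{E_j}$ and then on the $J_1$-events---and cancelling the common $\mu(\cap_{J_2}\overline{E_j})$ factor reduces the task to showing
\begin{align*}
\frac{\mu(\cap_{J_1} \overline{E_j} \mid \cap_{J_2} \overline{E_j}, E)}{\mu(\cap_{J_1} \overline{E_j} \mid \cap_{J_2} \overline{E_j})} \;\le\; \frac{1}{\mu(\cap_{J_1} \overline{E_j} \mid \cap_{J_2} \overline{E_j})} \;\le\; \exp(6pN),
\end{align*}
where the first inequality uses the trivial numerator bound of $1$.

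It remains to bound the denominator from below by $(1-2p)^N$. I would order $J_1 = \{j_1, \ldots, j_N\}$ arbitrarily and expand the denominator as a product of $N$ conditional probabilities of the form $\mu(\overline{E_{j_k}} \mid T_k)$, where $T_k$ is a conjunction of $\overline{E_{j'}}$'s (drawn from $J_2$ together with $\{j_1,\ldots,j_{k-1}\}$). Each factor is at least $1-2p$ by the standard inductive LLL estimate: under $4p\Delta_\Lambda \leq 1$, for every $j$ and every $T \subseteq J \setminus \{j\}$,
\begin{align*}
\mu(E_j \mid \cap_{j' \in T} \overline{E_{j'}}) \leq 2p,
\end{align*}
which is proven by induction on $|T|$, splitting $T$ into $T \cap N_\Lambda(j)$ (of size at most $\Delta_\Lambda$, handled via a union bound from the inductive hypothesis) and its complement (on which $E_j$ is independent). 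Combining $N$ such factors yields the denominator $\geq (1-2p)^N$, and since $p \le 1/(4\Delta_\Lambda) \le 1/4$, the elementary bound $(1-2p)^{-1} \leq \exp(4p)$ gives the ratio $\leq \exp(4pN) \leq \exp(6pN)$.

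The main obstacle, modulo routine bookkeeping, is the classical inductive LLL lemma, which is a well-known calculation. The key conceptual point---and what makes the quantitative statement go through at all---is the clean split $J=J_1\sqcup J_2$: the $J_2$-events depend on variables disjoint from $S$ and hence are strictly independent of $E$ under $\mu$, which lets us peel them off and pay only for the $N$ potentially-correlated events in $J_1$ in the final exponent.
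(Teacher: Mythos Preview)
The paper does not supply its own proof of this theorem; it is quoted from \cite{HaSS} (see also \cite[Prop.~6]{JP}) and is immediately followed by a \qed. Your argument is correct and is precisely the standard proof one finds in those references: Bayes' rule to invert the conditioning, independence under the product measure to cancel the $J_2$-factors, and the classical inductive LLL estimate $\mu(E_j\mid\cap_{j'\in T}\overline{E_{j'}})\le 2p$ (valid under $4p\Delta_\Lambda\le 1$) to lower-bound the remaining product of $N$ conditional probabilities. The only harmless wrinkle is that the step $p\le 1/(4\Delta_\Lambda)\le 1/4$ tacitly assumes $\Delta_\Lambda\ge 1$; the degenerate case $\Delta_\Lambda=0$ (all $E_j$ mutually independent) is trivial and irrelevant to the paper's application.
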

\qed

Following \cite{PSSS}, let $B$ be a bipartite graph on $(V_0,V_1)$,
$A_0 \subseteq V_0$, and $A_1 \subseteq V_1$.
The density in $B$ of the pair $(A_0, A_1)$
is
$$\rho_B(A_0, A_1) = \frac{e(B[A_0, A_1])}{|A_0||A_1|}.$$
We say the pair $(V_0, V_1)$ is $\varepsilon$\emph{-regular} 
if for all $A_i \subseteq V_i$ with
$|A_i| \geq \varepsilon |V_i|$ ($i=0,1$),
$$|\rho_B(A_0,A_1) - \rho_B(V_0, V_1)| \leq \varepsilon.$$
We say $B$ is $(\varrho, \delta)$\emph{-super regular}
if:
\begin{itemize}
\item $\rho_B(V_0, V_1) = \varrho$,
\item $(V_0,V_1)$ is $\delta$-regular, and
\item $\forall i \in \{0,1\}, v \in V_i$, 
$d_B(v) \geq (\varrho-\delta)|V_{1-i}|$
\end{itemize}

\begin{thm}\label{spreadRegBipMatch} (\cite{PSSS})
There is a nonincreasing
function $\CSRBM: (0,1] \to (1,\infty)$ 
such that the following holds.
Let $0 < \delta \ll \varrho$. Let $B$ be a $(\varrho, \delta)$-super
regular bipartite graph with parts of size $J$.
There exists a distribution on perfect matchings in $B$
which is $\CSRBM(\varrho)/J$-spread.
\end{thm}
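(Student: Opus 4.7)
The plan is to construct the distribution through an iterative nibble process on $B$, maintaining super-regularity of the residual bipartite graph throughout. In each round I would independently include each edge of the current residual with a small probability $\alpha/(\varrho J_t)$, where $J_t$ is the current part size, then discard edges that conflict to form a partial matching $M_t$ which is added to the running matching. The key invariant is that after each round, the residual graph $B_t$ on still-unmatched vertices remains $(\varrho(1-o(1)), \delta + o(1))$-super regular. This is established by Chernoff/Azuma concentration applied to (i) the density $\rho_B(A_0,A_1)$ for every $\delta$-large pair $(A_0,A_1)$, using the $\delta$-regularity assumption together with a union bound over pair types, and (ii) the individual vertex degrees in the residual, using the min-degree lower bound built into super-regularity.

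After $O_{\varrho}(1)$ nibble rounds, the part size has shrunk to $\varepsilon J$ for some small $\varepsilon \ll \varrho$. At this point I would switch to a completion stage: set aside, at the very start of the construction, a random ``reserve'' subgraph on a vertex subset of size $\varepsilon J$ on each side, to be used as an absorber, and use Hall's theorem inside the reserve to complete the partial matching to a perfect matching of $B$. The reserve is chosen to itself be super regular with high probability, so that it contains many perfect matchings covering every possible leftover pattern. Completion contributes only a constant factor to the spread bound per edge, since we can choose uniformly at random from $\Omega(1)^J$ many perfect matchings of the reserve that are consistent with any given leftover.

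For the spread bound, fix a target partial matching $T$ of size $k$, and reveal the random choices round by round. Let $T_t \subseteq T$ consist of the edges of $T$ whose endpoints both remain unmatched entering round $t$. Conditional on super-regularity of $B_t$, each edge in $T_t$ is included in round $t$ independently with probability $\alpha/(\varrho J_t)$, and survives conflict-resolution with at most constant probability, so $\P(T_t \subseteq M_t \mid B_t)$ is at most $(O(1)/J_t)^{|T_t|}$. Because we stop the nibble while $J_t = \Theta(J)$, multiplying across rounds (and absorbing the completion-stage factor) yields $\P(M \supseteq T) \leq (\CSRBM(\varrho)/J)^k$.

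The main obstacle is controlling the completion stage without degrading the spread constant. If one simply iterates the nibble down to $J_t = O(1)$, regularity breaks down and per-edge probabilities no longer stay $O(1/J)$; conversely, arbitrary ``leftover matchings'' can concentrate weight on a few edges and violate spreadness at scale $1/J$. The absorber strategy resolves both issues simultaneously, but proving that the reserve is both super-regular and rich in matchings compatible with arbitrary leftovers --- and that the induced completion distribution remains spread at the correct scale --- requires care, and is where I expect the bulk of the technical work to lie.
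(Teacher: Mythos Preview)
Your nibble-plus-absorber scheme is plausible, but it is a genuinely different route from the one the paper (following \cite{PSSS}) sketches. There, one forms a random $k$-out subgraph $K \subseteq B$ by letting \emph{every} vertex on \emph{both} sides independently select $k = O_{\varrho}(1)$ random $B$-neighbors. The edge set $K$ is then automatically $O(k/J)$-spread, since any fixed set of $t$ edges lies in $K$ with probability at most $(2k/J)^t$. One shows that $K$ contains a perfect matching with probability $\Omega(1)$ --- the paper stresses that two-sided selection is exactly what kills the isolated-vertex obstruction --- and then simply invokes Fact~\ref{conditionSpreadSubset}: conditioning on this $\Omega(1)$-probability event and passing to an arbitrary perfect matching $M \subseteq K$ inflates the spread constant by only a bounded factor.

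The payoff of the $k$-out approach is that spreadness is inherited from $K$ in one line, with no round-by-round regularity maintenance, no absorber, and no completion analysis. Your plan can likely be pushed through, but the step you yourself flag --- that the completion distribution inside the reserve is $O(1/J)$-spread for \emph{every} realized leftover pattern, not merely with high probability --- requires its own argument (essentially Corollary~\ref{highDegSpreadMatch} applied to the residual), and your per-round product bound needs a sum over the $O_\varrho(1)^{|T|}$ ways $T$ can be distributed across rounds and the completion stage (this is harmless but should be said). The $k$-out method sidesteps all of this.
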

\qed

For the complete, very nice proof of Theorem \ref{spreadRegBipMatch},
see \cite{PSSS}; we briefly digress to discuss the method of proof.
Theorem \ref{spreadRegBipMatch} is proven by first
showing the existence, with $\Omega(1)$ probability,
of a perfect matching in a random $k$-out subgraph $K \subseteq B$.
The subgraph $K$, viewed as a set of edges, 
is itself spread, and the desired spread distribution
on perfect matchings follows from:

\begin{fact}\label{conditionSpreadSubset} (See, e.g. \cite{Kee}, \S 2.2)
Suppose $S$ is $p$-spread and $E$ is an event
with $\P(E) \geq q$. Suppose further that, whenever $E$ holds,
we are given an arbitrary subset $T \subseteq S$.
The distribution on $T$, conditional on $E$, is $p/q$-spread.
\end{fact}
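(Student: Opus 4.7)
The plan is to verify the definition of $(p/q)$-spread for $T$ under the conditional measure directly, using nothing more than the definition of conditional probability and monotonicity of the spread bound.

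The key observation is that the hypothesis ``$T \subseteq S$ whenever $E$ holds'' means the event $\{T \supseteq A\} \cap E$ is a subevent of $\{S \supseteq A\}$ for every set $A$. First I would write
$$\P(T \supseteq A \mid E) \;=\; \frac{\P(T \supseteq A,\, E)}{\P(E)} \;\leq\; \frac{\P(S \supseteq A)}{\P(E)} \;\leq\; \frac{p^{|A|}}{q},$$
using the containment in the numerator and the $p$-spread property of $S$ together with $\P(E) \geq q$.

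Next, since $q \leq 1$, we have $q^{|A|} \leq q$ for all $|A| \geq 1$, so $p^{|A|}/q \leq p^{|A|}/q^{|A|} = (p/q)^{|A|}$. For $|A| = 0$ the inequality is trivial. Combining these gives $\P(T \supseteq A \mid E) \leq (p/q)^{|A|}$ for every $A$, which is exactly the assertion that the conditional distribution of $T$ is $(p/q)$-spread.

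There is no real obstacle here; the proof is a two-line manipulation, and the only subtle point is remembering that the bound $p^{|A|}/q$ we actually derive is strictly stronger than the stated $(p/q)^{|A|}$ (so the stated form is the clean, composable shape one wants to feed into later applications). I would write the argument in essentially the display above, with a brief sentence noting the subset relation and a brief sentence noting that $q \leq 1$ gives the final inequality.
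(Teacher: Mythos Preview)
Your proposal is correct and is essentially identical to the paper's own proof: the paper fixes a nonempty set $R$, writes $\P(R \subseteq T \mid E) = \P(\{R \subseteq T\} \cap E)/\P(E) \leq \P(R \subseteq S)/q \leq p^{|R|}/q \leq (p/q)^{|R|}$, which is exactly your chain of inequalities. Your explicit remark that $q \leq 1$ is what justifies the last step (and that $|A|=0$ is trivial) is a nice clarification the paper leaves implicit.
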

\begin{proof}
Let $R$ be any nonempty set.
\begin{align*}
\P(R \subseteq T | E) &= \P(\{R \subseteq T\} \cap E)/\P(E) \\
&\leq \P(R \subseteq S)/q
\leq p^{|R|}/q
\leq (p/q)^{|R|}.
\end{align*}
\end{proof}

Allowing vertices on both sides of the bipartition
of $B$ to select neighborhoods
is essential: if random neighborhoods $N_K(v) \subseteq N_B(v)$ 
were to be chosen only for $v \in V_0$, 
then $|N_K(v)| = \Omega(\log J)$ would be needed to guarantee
a perfect matching with $\Omega(1)$ probability.
The resulting $K$ would not be $O(1/J)$-spread.
The critical obstruction to the existence of a perfect
matching in a random bipartite graph is isolated vertices
(see, e.g. \cite{JLR} \S 4.1), which are directly
avoided by picking neighborhoods for vertices on both sides.
(For one striking early demonstration of this
phenomenon, see \cite{Wal}.)

In Section \ref{Matching}, we will find ourselves
in a regime not covered
by Theorem \ref{spreadRegBipMatch}, where we need a
spread $V_0$-perfect matching in a bigraph
$B$ on $(V_0, V_1)$ with $|V_0| \leq |V_1|$
and \emph{no universal lower bound on
the $(d_B(v) : v \in V_1)$'s}. 
If, for some $v \in V_1$, $d_B(v) = o(J)$, then a graph $K$ which
selects $O(1)$ random neighbors of $v$
would not be $O(1/J)$-spread. So we
will make use of a different construction to handle
low-degree vertices in $V_1$.
We will just require the following consequence
of Theorem \ref{spreadRegBipMatch}:

\begin{cor}\label{highDegSpreadMatch}
Let $0 < \lambda \ll 1$. Let $F$ be a bipartite
graph with parts $(V_0,V_1)$ of size $I$ and
$\forall v \in V_0 \cup V_1, d_F(v) \geq (1-\lambda)I.$
Then there exists a distribution on perfect
matchings in $F$ which is $O(1/I)$-spread.
\end{cor}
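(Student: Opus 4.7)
The plan is to derive Corollary \ref{highDegSpreadMatch} directly from Theorem \ref{spreadRegBipMatch} by checking that $F$ is $(\varrho,\delta)$-super regular for some $0 < \delta \ll \varrho$, with $\varrho$ bounded below by an absolute constant.

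First I would bound the overall density. Since every vertex of $F$ is missing at most $\lambda I$ neighbors, the total number of non-edges between $V_0$ and $V_1$ is at most $\lambda I \cdot I$, so $\varrho := \rho_F(V_0,V_1) \geq 1 - \lambda$. For $\lambda$ sufficiently small we thus have $\varrho \geq 1/2$.

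Next I would verify regularity. Take any $A_0 \subseteq V_0$, $A_1 \subseteq V_1$ with $|A_i| \geq \delta I$. The number of non-edges in $F[A_0,A_1]$ is at most $\sum_{v \in A_0}(I - d_F(v)) \leq \lambda I |A_0|$, so
$$\rho_F(A_0,A_1) \;\geq\; 1 - \frac{\lambda I}{|A_1|} \;\geq\; 1 - \frac{\lambda}{\delta}.$$
Combined with the trivial bound $\rho_F(A_0,A_1) \leq 1$ and $\varrho \in [1-\lambda, 1]$, this yields $|\rho_F(A_0,A_1) - \varrho| \leq \lambda/\delta$. Choosing $\delta = \sqrt{\lambda}$ makes the right-hand side at most $\delta$, so $(V_0,V_1)$ is $\delta$-regular. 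The required minimum-degree condition $d_F(v) \geq (\varrho-\delta)|V_{1-i}|$ follows from $d_F(v) \geq (1-\lambda)I \geq (1-\sqrt{\lambda})I \geq (\varrho-\delta)I$, using $\varrho \leq 1$ and $\delta = \sqrt{\lambda} \geq \lambda$. Since $\lambda \ll 1$, we also get $\delta = \sqrt{\lambda} \ll \varrho$, so the hypothesis of Theorem \ref{spreadRegBipMatch} is met.

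Finally, applying Theorem \ref{spreadRegBipMatch} produces a $\CSRBM(\varrho)/I$-spread distribution on perfect matchings in $F$. Because $\CSRBM$ is nonincreasing and $\varrho \geq 1/2$, the constant $\CSRBM(\varrho) \leq \CSRBM(1/2)$ is absolute, giving the claimed $O(1/I)$-spread distribution. There is no real obstacle here: the corollary is essentially a packaging of the observation that a balanced bipartite graph with near-complete minimum degree is automatically super regular with density near $1$, which is comfortably inside the regime covered by Theorem \ref{spreadRegBipMatch}.
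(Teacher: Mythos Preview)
Your proof is correct and matches the paper's argument essentially line for line: set $\delta = \sqrt{\lambda}$, check $(\varrho,\delta)$-super regularity via the near-complete minimum-degree condition, and apply Theorem \ref{spreadRegBipMatch} together with the monotonicity of $\CSRBM$ (the paper uses $\CSRBM(3/4)$ where you use $\CSRBM(1/2)$, an immaterial difference).
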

\begin{proof}
Let $\varrho = \rho_F(V_0,V_1) \geq 1-\lambda$
and $\delta = \sqrt{\lambda} \ll \varrho$.
Then $F$ is $(\varrho, \delta)$-super-regular.
Indeed,
\begin{itemize}
\item $\rho_F(V_0, V_1) = \varrho$ by definition;
\item if $A_i \subseteq V_i$ with $|A_i| \geq \delta|V_i|$
($i = 0,1$), then
\begin{align*}
\rho_F(A_0,A_1) &\geq \frac{\delta - \lambda}{\delta} \\
&= 1 - \delta,
\end{align*}
so
\begin{align*}
|\rho_F(A_0,A_1) - \varrho| \leq \delta;
\end{align*}
\item and, $\forall v \in V_0 \cup V_1$,
$d_F(v) \geq (1-\lambda)I > (\varrho - \delta)I.$
\end{itemize}
Therefore, Theorem \ref{spreadRegBipMatch}
implies there is a $\CSRBM(\varrho)/I$-spread
distribution on perfect matchings in $F$. 
In particular, 
since $\CSRBM$ is nonincreasing,
there is a $C/I$-spread distribution
on perfect matchings in $F$, for $C = \CSRBM(\frac{3}{4}),$
say.
\end{proof}

For each $v \in V$,
pick $\tau_v \in \Gamma = [D+1]$ uniformly
at random and independently. Let 
$$T = \{v \in V : \forall w \sim_G v, \tau_w \neq \tau_v\}$$
and, for $v \in V$,
$$P_v = \{uw \in \overline{G}[N_v] : \tau_u = \tau_w \text{ and } \forall z \in (N_v \cup N_u \cup N_w)
\setminus \{u,w\}, \tau_z \neq \tau_u\}.$$
We will need the following fact
which follows from the proof in (\cite{KK1}, \S 6.2):

\begin{thm}\label{martConsequence} (\cite{KK1})
Let $0 < \vartheta \ll 1$, $V^* \subseteq V$,
and suppose that $\forall v \in V^*$,
$e(\overline{G}[N_v]) \geq \vartheta D^2$.
There is a constant $\vartheta' = \Omega(\vartheta)$
such that the following holds. Define the event
$$A_v = \{|N_v \cap T| \neq (e^{-1} \pm \vartheta'/3)D\}
\cup \{|P_v| < \vartheta' D\}.$$
Then,
$$\P(A_v) < e^{-D^{1-o(1)}} \quad \forall v \in V^*.$$ 
\end{thm}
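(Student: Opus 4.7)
The plan is to reduce both tail estimates in $A_v$ to a first-moment computation together with a concentration bound on functions of the independent colors $(\tau_w : w \in V)$, in the style of the Molloy--Reed probabilistic coloring toolkit.

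\textbf{Expectations.} For $u \in N_v$, $\P(u \in T) = (1 - 1/(D+1))^{d(u)}$, which is $e^{-1} + o(1)$ in the regime where the vertices in $N_v$ have degree $(1-o(1))D$ (as holds in the applications of this theorem, where $v$ is drawn from a dense cluster). Hence $\E|N_v \cap T| = (e^{-1} + o(1))D$. For a non-edge $uw \in \overline{G}[N_v]$, the event $uw \in P_v$ is the conjunction of $\tau_u = \tau_w$ with the absence of that color on $(N_v \cup N_u \cup N_w) \setminus \{u,w\}$, giving
$$\P(uw \in P_v) = \frac{1}{D+1}\paren{1 - \frac{1}{D+1}}^{|N_v \cup N_u \cup N_w| - 2} \geq \frac{1}{e^3 (D+1)}.$$
Summing over the $\geq \vartheta D^2$ non-edges guaranteed in $\overline{G}[N_v]$ yields $\E|P_v| \geq c \vartheta D$ for an absolute constant $c > 0$. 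I take $\vartheta' = c\vartheta / 2$, which is $\Omega(\vartheta)$ and satisfies $\E|P_v| \geq 2\vartheta' D$ while making the discrepancy $|\E|N_v \cap T| - e^{-1}D|$ comfortably smaller than $\vartheta' D / 6$.

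\textbf{Concentration.} Both $|N_v \cap T|$ and $|P_v|$ are functions of the independent colors, but a single-coordinate perturbation of some $\tau_w$ can cascade through up to $\Theta(D)$ membership indicators, so naive McDiarmid/Azuma gives only polynomial concentration. The standard remedy, executed in \S 6.2 of \cite{KK1}, is a two-step argument. First, declare a small exceptional event $B$ controlling the size of local monochromatic classes, of the form ``for some $w \in N_v \cup \bigcup_{u \in N_v} N_u$ and some $c \in [D+1]$, the set $\{u \in N_w : \tau_u = c\}$ is abnormally large''; a union bound with Chernoff (Proposition \ref{tailConc}) gives $\P(B) \leq e^{-D^{1-o(1)}}$ for a suitable threshold. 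Second, conditional on $\overline{B}$, perturbing any single $\tau_w$ flips the status of few vertices in $N_v$ with respect to $T$ and affects few pairs in $P_v$, so Talagrand's inequality for certifiable functions (each atomic contribution being certified by the colors of a bounded local neighborhood of size $O(D)$) produces tails of order $e^{-D^{1-o(1)}}$ at the deviation scale $\vartheta' D / 3$. Combining the two steps and applying a union bound over the two halves of $A_v$ yields $\P(A_v) < e^{-D^{1-o(1)}}$ for every $v \in V^*$.

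\textbf{Main obstacle.} The most delicate part is the handling of $|P_v|$: its summands share coordinates via the overlapping sets $N_v \cup N_u \cup N_w$, so one must simultaneously (i) choose the bad event $B$ loose enough that $\P(B) \leq e^{-D^{1-o(1)}}$ yet (ii) tight enough that the conditional dependence structure permits Talagrand to give a matching upper bound on the deviation probability. Balancing these constraints and carefully tracking certificates through the overlapping neighborhoods is the main bookkeeping challenge; once arranged as in \cite{KK1}, the first-moment computation and Talagrand close the argument.
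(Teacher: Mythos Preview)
Your proposal is essentially the paper's own proof: compute the first moments (yielding $\E|P_v| \gtrsim e^{-3}\vartheta D$, whence the paper's choice $\vartheta' = \tfrac{1}{2}e^{-3}\vartheta$), then invoke the concentration machinery of \cite[\S 6.2]{KK1}. One small correction: your parenthetical ``$v$ is drawn from a dense cluster'' is backwards---$V^*$ is the set of \emph{sparse} vertices---and the reason $\E|N_v \cap T| \sim e^{-1}D$ is simply that $G$ is taken $D$-regular (see \S\ref{Outline}). The only cosmetic difference is that where you phrase concentration as ``condition on a good event $B$, then Talagrand,'' the paper (following \cite{KK1}) instead passes to a proxy variable $X$ with $|X - |N_v \cap T|| = o(D)$ w.h.p.\ and bounded Lipschitz constant $C = D^{o(1)}$, then applies an Azuma-type martingale bound directly to $X$; this sidesteps the issue that conditioning on $\overline{B}$ destroys the product structure Talagrand needs, which your sketch leaves implicit.
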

\begin{proof} \textit{(Sketch)} \\
We describe how to extract Theorem \ref{martConsequence} from
\cite[\S 6.2]{KK1}. At the end of ``\textit{Proof of (54)}", on page 16
of \cite{KK1},
it is shown that
$$\P(|X-\E X| > \lambda) = \exp[-\Omega(\lambda^2/(C^2 D))].$$
Here $X$ is a quantity such that 
\begin{align}
\label{EXprops}
\E X \sim e^{-1} D
\end{align}
and 
\begin{align}
\label{Xprops}
|X - |T \cap N_v|| = o(D) \text{ with probability at least }1 - e^{-\omega(D)};
\end{align}
also, $C = D^{o(1)}.$ Setting $\vartheta' = \frac{1}{2} e^{-3} \vartheta$
(q.v. (\ref{reasonForVT'}) below)
and taking
$D$ large enough so that $|\E X - e^{-1} D| < (\vartheta'/9)D$ (using (\ref{EXprops}))
and
$|X - |T \cap N_v|| < (\vartheta'/9)D$
(using (\ref{Xprops}), with failure probability at most $e^{-\omega(D)}$), 
and choosing $\lambda = (\vartheta'/9)D$,
we find
$$\P(||N_v \cap T| - e^{-1} D| > (\vartheta'/3)D) < \exp[-D^{1-o(1)}].$$

In the ``\textit{Proof of (12)}" on Page 17 of \cite{KK1},
$X = |P_v|$ and it is shown that
\begin{align}
\label{reasonForVT'}
\E X > \vartheta e^{-3} D
\end{align}
and
$$\P(|X - \E X| > \lambda) = \exp[-\Omega(\lambda^2/D)].$$
Here, setting $\lambda = \vartheta' D$ gives
$$\P(|P_v| < \vartheta' D) < \exp[-\Omega(\vartheta^2 D)] = \exp[-D^{1-o(1)}],$$
completing the proof by a union bound.
\end{proof}

We require the ingenious sparse-dense decomposition
of Assadi, Chen, and Khanna \cite{ACK}
(expanding on \cite{HSS}, which was
inspired by \cite{Ree}).

\begin{thm}\label{ackSparseDenseDecomp} (\cite{ACK})
Let $0 < \epsilon \ll 1$, and let
$G$ be a $D$-regular graph. There exist
$\vartheta = \Omega(\epsilon^2)$ and
$\varepsilon = O(\epsilon)$, and a partition
$$V = V^* \cup \C_1 \cup \cdots \cup \C_m$$
($m \in \mathbbm{N}$) such that the following holds.

The vertices $v \in V^*$ are \emph{sparse},
in the sense that
$$e(\overline{G}[N_v]) \geq \vartheta D^2.$$

Each set $\C = \C_i$ $(i \in [m])$ is a \emph{cluster}:
\begin{itemize}
\item $\forall v \in \C, |N_v \setminus \C| < \varepsilon D$ and
\item $\forall v \in \C, |\C \setminus N_v| < \varepsilon D.$
\end{itemize}
\end{thm}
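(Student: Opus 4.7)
The plan is to follow the sparse-regularization strategy going back to Reed \cite{Ree} and refined by \cite{HSS} and \cite{ACK}: split the vertices by the density of their neighborhoods, then cluster the dense vertices using the relation ``your neighborhood and mine nearly coincide.''

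First I set $\vartheta = c \epsilon^2$ for a small absolute constant $c$, put every vertex $v$ with $e(\overline{G}[N_v]) \geq \vartheta D^2$ into $V^*$, and call the remaining vertices \emph{dense}. All clusters will be built from dense vertices. A double count gives, for any dense $v$,
$$\sum_{u \in N_v} |N_v \setminus (N_u \cup \{u\})| = 2\, e(\overline{G}[N_v]) < 2\vartheta D^2,$$
so by Markov at most $O(\sqrt{\vartheta}) D = O(\epsilon) D$ of the $u \in N_v$ fail to satisfy $|N_v \setminus N_u| \leq \sqrt{\vartheta} D$. Since $G$ is $D$-regular, $|N_u| = |N_v| = D$, and such a ``close neighbor'' $u$ automatically has $|N_u \triangle N_v| = O(\epsilon) D$.

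Next I define $u \sim v$ iff $|N_u \triangle N_v| \leq \alpha D$, for a threshold $\alpha$ satisfying $2\alpha + O(\epsilon) < \varepsilon$. The triangle inequality $|A \triangle C| \leq |A \triangle B| + |B \triangle C|$ makes $\sim$ near-transitive. I then pick greedily a maximal set of dense representatives $v_1, \ldots, v_m$ no two of which are $\sim$-related, and set $\C_i = \{v_i\} \cup \{\text{dense } u : u \sim v_i\}$, breaking ties among multiply-related $u$'s by assigning to the $\C_i$ of least index. By maximality every dense vertex is placed; by near-transitivity any putative overlap $\C_i \cap \C_j$ (with $i < j$) would force $v_i \sim v_j$, contradicting the greedy rule. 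For $v \in \C_i$ we have $|N_v \triangle N_{v_i}| \leq \alpha D$, while the second-step bound gives $|\C_i \triangle (N_{v_i} \cup \{v_i\})| = O(\epsilon) D$, so that $|N_v \setminus \C_i|, |\C_i \setminus N_v| < \varepsilon D$.

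The main obstacle, and the only real subtlety, is making the partition honest: one must choose $\vartheta \ll \alpha^2$ and $\alpha$ slightly less than $\varepsilon$ with enough slack that the ``close neighbors'' located in the second step truly land in the same cluster as their center, and that near-transitivity suffices to exclude cross-cluster memberships. This bookkeeping is precisely what forces $\vartheta = \Omega(\epsilon^2)$ and $\varepsilon = O(\epsilon)$, and it is where the proof demands the most care.
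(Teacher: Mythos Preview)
The paper does not prove this theorem; it quotes the result from \cite{ACK} and marks it with a \qed. Your sketch follows the correct lineage (Reed $\to$ HSS $\to$ ACK) and has most of the right ingredients, but there is a genuine gap in the clustering step.

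You claim that ``the second-step bound gives $|\C_i \triangle (N_{v_i} \cup \{v_i\})| = O(\epsilon) D$'', but the second step only shows that all but $O(\sqrt{\vartheta})D$ neighbors $u$ of $v_i$ satisfy $|N_u \triangle N_{v_i}| \leq 2\sqrt{\vartheta} D$. Such a $u$ need not land in $\C_i$. First, your tie-breaking rule may send $u$ to some $\C_j$ with $j < i$; then $u \sim v_j$ and $u \sim v_i$ give only $|N_{v_i} \triangle N_{v_j}| \leq 2\alpha D$, which does \emph{not} contradict $v_i \not\sim v_j$ because you chose representatives with the \emph{same} threshold $\alpha$. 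So your assertion that ``near-transitivity suffices to exclude cross-cluster memberships'' is false as written. Second, a close neighbor $u$ of $v_i$ can be sparse: the bound you get is $e(\overline{G}[N_u]) \leq (\vartheta + 2\sqrt{\vartheta})D^2$, which exceeds $\vartheta D^2$, so your rule puts $u$ in $V^*$ and in no cluster at all. Either defect can make $|N_{v_i} \setminus \C_i|$ as large as $\Omega(D)$.

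The standard repair is to separate the thresholds: require representatives to be pairwise at symmetric-difference distance $> 2\alpha D$ while still assigning $u$ to $\C_i$ whenever $|N_u \triangle N_{v_i}| \leq \alpha D$; then $u \sim v_i$ and $u \sim v_j$ genuinely contradict the separation of $v_i$ and $v_j$. Likewise, the sparse-neighbor issue is handled either by using the friend-based definition of density from \cite{ACK,HSS} or by allowing some sparse vertices into clusters (note the theorem only asserts $V^* \subseteq \{\text{sparse}\}$, not equality). Your final paragraph correctly flags this bookkeeping as ``where the proof demands the most care'', but the single-threshold greedy construction you actually wrote down does not go through.
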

\qed

\section{Warmups}\label{Warmups}
\begin{prop}\label{slackSpread}
Let $G=(V,E)$ be a graph and $(S_v : v \in V)$
lists of colors. 
\begin{enumerate}
\item \label{sSLocalDegree} If
$\forall
v \in V, |S_v| \geq (1+\lambda)d(v),$
then there is a $\frac{1}{\lambda d(v)}$-spread
distribution on $S$-colorings of $G$.
\item \label{sSMaxDegree} If
$\forall v \in V, |S_v| \geq (1+\lambda)\Delta_G,$
then there is a $\frac{1}{\lambda \Delta_G}$-spread
distribution on $S$-colorings of $G$.
\end{enumerate}
\end{prop}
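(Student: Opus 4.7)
The plan is to construct both distributions via the classical random-greedy coloring procedure and read off the spread bound from the chain rule. Fix any ordering $v_1,\ldots,v_n$ of $V$; color the $v_i$ in this order, choosing $\sigma(v_i)$ uniformly at random from the palette
$$R_i \;:=\; S_{v_i} \setminus \{\sigma(v_j) : j<i,\ v_j \sim v_i\}.$$
Under the hypothesis of (\ref{sSLocalDegree}) we have $|R_i| \geq |S_{v_i}| - d(v_i) \geq \lambda d(v_i)$, and under (\ref{sSMaxDegree}) we have $|R_i| \geq (1+\lambda)\Delta_G - \Delta_G = \lambda \Delta_G$. In either case $R_i$ is nonempty, so $\sigma$ is a well-defined proper $S$-coloring of $G$.

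To verify the spread bound, fix $A \subseteq X = \bigcup_v \{v\} \times S_v$. If two pairs of $A$ share a first coordinate then $\P(A \subseteq \sigma) = 0$ and we are done; otherwise let $u_1,\ldots,u_k$ be the vertices appearing in $A$ listed in the order they are colored, with associated colors $c_1,\ldots,c_k$, and let $i_j$ denote the global position of $u_j$. By the chain rule,
$$\P(A \subseteq \sigma) \;=\; \prod_{j=1}^{k} \P\!\paren{\sigma(u_j)=c_j \,\big|\, \sigma(u_1)=c_1,\ldots,\sigma(u_{j-1})=c_{j-1}}.$$
By the tower property, each factor is bounded by the worst case, over histories $(\sigma(v_i))_{i<i_j}$ consistent with the conditioning event, of the one-step probability $\P(\sigma(u_j)=c_j \mid (\sigma(v_i))_{i<i_j})$. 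That one-step probability is either $0$ (if $c_j$ has already been used by an earlier neighbor of $u_j$) or equal to $1/|R_{i_j}|$, and hence at most $1/(\lambda d(u_j))$ in case (\ref{sSLocalDegree}) (respectively $1/(\lambda \Delta_G)$ in case (\ref{sSMaxDegree})) by the first paragraph. Multiplying over $j$ yields exactly the $\overset{\to}{p}$-spread bound claimed.

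The argument is essentially routine; the only small point is ensuring that the uniform one-step bound from the first paragraph persists under the conditioning on earlier pairs of $A$, which is precisely what the tower property delivers, regardless of how the vertices of $A$ are interleaved with the rest of the coloring order. There is no real obstacle.
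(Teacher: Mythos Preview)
Your proof is correct and follows exactly the same random-greedy approach as the paper: fix an arbitrary ordering, color each vertex uniformly from its list minus the colors already used on earlier neighbors, and read off the spread bound from the lower bound $|S_v| - d(v)$ (resp.\ $|S_v| - \Delta_G$) on the remaining palette. Your treatment is in fact slightly more careful about the conditioning via the tower property, but there is no substantive difference.
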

\begin{proof}
Order $V = v_1, v_2, \cdots, v_n$ arbitrarily,
and write 
$$N^{-}(v_i) = N_G(v_i) \cap \{v_1, \cdots, v_{i-1}\}.$$
We randomly color $G$ as follows:
for $i = 1, 2, \cdots,$ pick
$\sigma(v_i)$ uniformly at random from
$S_{v_i} \setminus \sigma(N^{-}(v_i))$.
Let $k \in [n]$ and consider a partial
$S$-coloring $\tau =
\{(u_1, \tau_1), \cdots, (u_k, \tau_k)\}.$
When a vertex $u$ picks its color $\sigma(u),$
\begin{align*}
|S_u \setminus \sigma(N^-(u))| &\geq |S_u| - d(u) \\
&\geq \begin{cases}
\lambda d(v) \quad \text{ if we are in (\ref{sSLocalDegree})} \\
\lambda \Delta_G \quad \text{ if we are in (\ref{sSMaxDegree}).}
\end{cases}
\end{align*}
Therefore,
$$\P(\sigma(u_1) = \tau_1, \cdots, \sigma(u_k) = \tau_k)
\leq \begin{cases}
\prod_{i=1}^k \frac{1}{\lambda d(u_i)} \quad \text{ if we are in (\ref{sSLocalDegree})} \\
\paren{\frac{1}{\lambda \Delta_G}}^k \quad \text{ if we are in
(\ref{sSMaxDegree}).}
\end{cases}$$
\end{proof}
Proposition \ref{slackSpread} (\ref{sSLocalDegree})
implies the following (also `warmup') theorem of Alon
and Assadi \cite{AA} (in much
the same way that Theorem \ref{main} implies
Theorem \ref{ack}, but using Theorem \ref{pv}
instead of Theorem \ref{fknp}). Just as in (\cite{AA}, Remark 4.1),
the $d_G(v)$ and $\Delta_G$ of Proposition
\ref{slackSpread} can be replaced with $\kappa_G(v)$
(the degree of $v$ in a degeneracy ordering of $G$)
and $\kappa$ (the degeneracy of $G$), respectively.

\begin{thm}\label{aaSlackRandListCol} (\cite{AA})
Let $G=(V,E)$ be a graph on $n$ vertices, and suppose
$\forall v \in V, |S_v| \geq (1+\lambda) d(v)$.
There is a $k = O(\frac{\log n}{\lambda})$ such that
if each $v \in V$ independently and uniformly
selects $L(v) \in {S_v \choose k}$, then
$G$ is $L$-colorable w.h.p.
\end{thm}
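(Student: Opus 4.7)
The plan is to mirror the derivation of Theorem \ref{ack} from Theorems \ref{main} and \ref{fknp} given in Section \ref{BasicsNotation}, but with the $\overset{\to}{q}$-spread distribution on $S$-colorings furnished by Proposition \ref{slackSpread}(\ref{sSLocalDegree}) in place of Theorem \ref{main}, and with Theorem \ref{pv} in place of Theorem \ref{fknp}.

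First I would, without loss of generality, trim each list so that $|S_v| = \lceil (1+\lambda) d(v) \rceil$ (any $S'$-coloring with $S' \subseteq S$ is an $S$-coloring) and deal with isolated vertices separately. Proposition \ref{slackSpread}(\ref{sSLocalDegree}) then supplies a distribution on proper $S$-colorings of $G$ which, viewed as a distribution on subsets of $X = \bigcup_{v \in V} \{v\} \times S_v$, is $\overset{\to}{q}$-spread for
\[
q_{(v,c)} := \min\paren{\tfrac{9}{10},\ \tfrac{1}{\lambda d(v)}}
\]
(capping keeps $\overset{\to}{q} \in (0,1)^X$ as Theorem \ref{pv} requires, and only strengthens the spread condition). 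The coloring hypergraph $\F = \F(G,S)$ is $n$-uniform, so Theorem \ref{pv} applied with $\overset{\to}{p} = C \log(n)\, \overset{\to}{q}$ ($C$ the constant of that theorem) gives
\[
\P(X_{\overset{\to}{p}} \in \hat{\F}) = 1 - o_{n \to \infty}(1).
\]

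Next I would set $k = \lceil C'\log(n)/\lambda \rceil$ for a sufficiently large absolute constant $C' = C'(C)$ and control the per-vertex pick count $\xi_v := |X_{\overset{\to}{p}} \cap (\{v\} \times S_v)|$. Since $p_{(v,c)}$ depends only on $v$, $\xi_v$ is binomial with
\[
\E \xi_v \leq |S_v| \cdot C \log(n) \cdot q_{(v,c)} = O\paren{\tfrac{\log n}{\lambda}},
\]
so Proposition \ref{tailConc} yields $\P(\xi_v > k) < n^{-10}$, and a union bound gives $\P(E) < n^{-9}$ for $E := \{\exists v : \xi_v > k\}$. (When the cap $q_{(v,c)} = \tfrac{9}{10}$ is active we have $d(v) = O(1/\lambda)$, and so $\xi_v \leq |S_v| = O(1/\lambda) \leq k$ deterministically.)

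Finally I would couple $(L, X_{\overset{\to}{p}})$ exactly as in the proof of ``Theorem \ref{main} \& Theorem \ref{fknp} $\implies$ Theorem \ref{ack}'' in Section \ref{BasicsNotation}: condition on $(\xi_v : v \in V)$; if $\overline{E}$ holds, first draw each $L(v) \in {S_v \choose k}$ uniformly and independently, then draw $X_{\overset{\to}{p}} \cap (\{v\} \times S_v)$ as a uniform $\xi_v$-subset of $\{v\} \times L(v)$. Since $p_{(v,c)}$ is constant in $c$ for each fixed $v$, this recovers the correct marginal of $X_{\overset{\to}{p}}$, and any proper $S$-coloring $\tau \subseteq X_{\overset{\to}{p}}$ is then automatically an $L$-coloring, giving
\[
\P(G \text{ is not } L\text{-colorable}) \leq \P(E) + \P(X_{\overset{\to}{p}} \notin \hat{\F}) = o_{n \to \infty}(1).
\]
The chief (mild) obstacle is ensuring the coupling respects the inhomogeneous $\overset{\to}{p}$, and this is resolved precisely by the $c$-independence of $p_{(v,c)}$ within each block $\{v\} \times S_v$; everything else is Chernoff plus a union bound, mirroring Section \ref{BasicsNotation}.
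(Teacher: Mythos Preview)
Your proposal is correct and follows precisely the route the paper indicates: the paper does not write out a proof of Theorem \ref{aaSlackRandListCol} but merely remarks that Proposition \ref{slackSpread}(\ref{sSLocalDegree}) implies it ``in much the same way that Theorem \ref{main} implies Theorem \ref{ack}, but using Theorem \ref{pv} instead of Theorem \ref{fknp}''. You have faithfully supplied those details, including the coupling, the Chernoff bound on the per-vertex pick counts, and the minor care needed to keep $\overset{\to}{q}\in(0,1)^X$ and to handle vertices of very small degree.
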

\qed

\subsection{Counterexamples}
Suppose $G=(V,E)$ has maximum degree $D$
and $\forall v \in V, |S_v| \geq D+1$.
Is the \emph{uniform} distribution $\bbsigma$ on
$S$-colorings of $G$, $O(1/D)$-spread? 
Not necessarily. In the following example,
a single vertex-color assignment $\bbsigma(v^*) = \gamma^*$
(the `red thumb') is `too attractive'
to $\bbsigma$, so spread fails.

Let $G = K_{D+1}$ and write $V(G) = \{0,1,\cdots,D\}.$
Let $S_0 = \{0,1,\cdots,D+1\}$ and for $i = 1, \cdots, D,$
$S_i = [D+1]$.
Since $|\{S\text{-colorings } \sigma \text{ s.t. } \sigma(0) = 0\}|$
$=|\{S\text{-colorings } \sigma \text{ s.t. } \sigma(0) \neq 0\}|$
$=(D+1)!$, $P(\bbsigma(0) = 0) = \frac{1}{2} \neq O(1/D).$ 

Even the uniform distribution on $(D+1)$-colorings
(i.e. $S$-colorings with $\forall v \in V, S_v = [D+1]$)
need not be $O(1/D)$ spread.
Assume for simplicity that $D+1$ is a perfect square, and
let $V = U \sqcup W$, where $|U| = \sqrt{D+1}$ and
$|W| = D+1 - \sqrt{D+1}.$
Let $E = E(G) = {V \choose 2} \setminus {U \choose 2}.$
The number $|\{(D+1)\text{-colorings } \sigma
\text{ of } G \text{ s.t. } \forall u \in U,
\sigma(u) = D+1\}| = D^{\underline{D+1 - \sqrt{D+1}}}.$
In total, there are
$$(D+1)^{\underline{D+1 - \sqrt{D+1}}} (D+1)^{\frac{1}{2} \sqrt{D+1}}$$
$(D+1)$-colorings of $G$.
Thus
\begin{align*}
\P(\forall u \in U, \bbsigma(u) = D+1)
&= (D+1)^{-\frac{1}{2}(\sqrt{D+1} + 1)} \\
&= \omega(1/D)^{\sqrt{D+1}}.
\end{align*}


Another natural distribution on $D+1$-colorings
of $G$ is given by the following simple algorithm:
begin with the empty partial coloring 
$\bbtau = \emptyset$. While the domain
of $\bbtau$ is a proper subset of $V$,
pick a uniformly-random $v \in V \setminus \text{domain}(\bbtau)$
and assign to $v$ a uniformly-random color
from $[D+1] \setminus \bbtau(N_v \cap \text{domain}(\bbtau)).$
We call $\bbtau$ the \emph{random-greedy} distribution,
and for many graphs $G$ it is not $O(1/D)$-spread.
For example, let $G = K_{D,D}$ with bipartition
$([D], [2D] \setminus [D])$. Let $\tau$ be the coloring
$\{(1,1), \cdots, (D,D), (D+1, D+1), \cdots (2D,D+1)\}$
(in words: $\tau$ gives each vertex $i \in [D]$ color $i$,
while vertices $D+1$ through $2D$
all get color $D+1$).
We call $\tau$ the `greedy boys' coloring.
Then
\begin{align*}
\P(\bbtau = \tau) &>
\P(\bbtau \text{ picks vertices } [D] \text{ before } [2D] \setminus [D])
/D! \\
&= \frac{1}{(2D)^{\underline{D}}} > (2D)^{-D} = \omega(1/D)^{2D}.
\end{align*}
 
\section{Outline}\label{Outline}

We now outline our construction of an $O(1/D)$-spread
coloring of $G$. Our approach was inspired by, and
bears a strong resemblance to, the argument in \cite{KK1},
but in every case the argument here is more straightforward.

We build our spread coloring of $G$ in several stages,
implicitly relying on (repeated applications of)
the following facts:

\begin{fact}\label{spreadTogether}
Let $p, q \in [0,1]$. Suppose $S$ is a
$p$-spread random set, and given any possible value
$S_0$ of $S$, $T_{S_0} = (T|\{S=S_0\})$ is a $q$-spread
random set. Then $S \cup T_S$ is $2 \max\{p,q\}$-spread.
\end{fact}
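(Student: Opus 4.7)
The plan is to bound $\P(R \subseteq S \cup T_S)$ for an arbitrary test set $R$ by expanding over the possible ways $R$ can split between $S$ and $T_S$, and then summing the two spread estimates.

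First I would observe that if $R \subseteq S \cup T_S$, then writing $A = R \cap S$ and $B = R \setminus S$ exhibits a decomposition $R = A \sqcup B$ with $A \subseteq S$ and $B \subseteq T_S$. Hence
$$\{R \subseteq S \cup T_S\} \;\subseteq\; \bigcup_{A \subseteq R}\bigl(\{A \subseteq S\} \cap \{R \setminus A \subseteq T_S\}\bigr),$$
so by the union bound,
$$\P(R \subseteq S \cup T_S) \;\leq\; \sum_{A \subseteq R} \P(A \subseteq S,\ R \setminus A \subseteq T_S).$$

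Next I would handle each summand by conditioning on $S$. For any fixed value $S_0$, the hypothesis says $T_{S_0}$ is $q$-spread, so $\P(R \setminus A \subseteq T_S \mid S = S_0) \leq q^{|R \setminus A|}$, independently of $S_0$. Therefore
$$\P(A \subseteq S,\ R \setminus A \subseteq T_S) \;\leq\; q^{|R\setminus A|}\,\P(A \subseteq S) \;\leq\; p^{|A|}\, q^{|R \setminus A|},$$
using that $S$ is $p$-spread. Summing over subsets of $R$ and applying the binomial theorem,
$$\P(R \subseteq S \cup T_S) \;\leq\; \sum_{A \subseteq R} p^{|A|} q^{|R \setminus A|} \;=\; (p+q)^{|R|} \;\leq\; (2\max\{p,q\})^{|R|},$$
which is the desired bound.

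There is really no hard step here: the only subtlety is making sure the conditional spread bound for $T_{S_0}$ integrates cleanly against $S$, which works precisely because the bound $q^{|R\setminus A|}$ is uniform in $S_0$. I would present the argument in the three displays above with one line of commentary each.
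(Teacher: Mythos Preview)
Your proof is correct and follows essentially the same approach as the paper's: decompose the test set according to which part is covered by $S$ versus $T_S$, apply the two spread bounds, and sum over subsets. The only cosmetic difference is that you use a union bound over all $A\subseteq R$ and evaluate the sum as $(p+q)^{|R|}$ via the binomial theorem, whereas the paper partitions exactly by $V=S\cap U$ and bounds the same sum directly by $2^{|U|}(\max\{p,q\})^{|U|}$.
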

\begin{proof}
Fix $U$.
\begin{align*}
\P[U \subseteq S \cup T_S] 
&= \sum_{V \subseteq U} \P(S \cap U = V)
\sum_{\substack{S_0 \text{ s.t.} \\ S_0 \cap U = V}} \P(S = S_0 | S \cap U = V) 
\P[(U \setminus V) \subseteq T_{S_0}] \\
&\leq \sum_{V \subseteq U} \P(V \subseteq S) \max_{\substack{S_0 \text{ s.t.} \\
S_0 \cap U = V}} (\P[(U \setminus V) \subseteq T_{S_0}]) \\
&\leq \sum_{V \subseteq U} p^{|V|} q^{|U|-|V|} \leq 2^{|U|} (\max \{p,q\})^{|U|}.
\end{align*}
\end{proof}
In particular, if $\sigma$ is an $O(1/D)$-spread
partial coloring of $G$, and 
$\tau = (\tau | \sigma)$ is an $O(1/D)$-spread
extension of $\sigma$,
then $\sigma \cup \tau$ is an $O(1/D)$-spread
partial coloring of $G$.

We may assume $G$ is $D$-regular. Indeed, let
$G'=(V',E')$ be any $D$-regular graph such
that $G = G'[V]$. If $\varphi$ is an $O(1/D)$-spread
distribution on $(D+1)$-colorings of $G'$ then
$\varphi|_{V}$ is an $O(1/D)$-spread distribution
on $(D+1)$-colorings of $G$ (see Fact \ref{conditionSpreadSubset}).

Let $0 < \epsilon \ll 1$. Let $V = V^* \cup \C_1 \cup
\cdots \cup \C_m$ be the sparse-dense decomposition
of Assadi, Chen, and Khanna (Theorem \ref{ackSparseDenseDecomp}). Inspired by
\cite{ACK}, \cite{KK1}, and \cite{KK2}, our construction
of an $O(1/D)$-spread distribution on $(D+1)$-colorings
of $G$ proceeds as follows.

\subsection{Sparse Vertices}
We color $V^*$ in two stages. We always
write $\sigma$ for the expanding partial coloring
of $G$. It will be convenient to work on a regular graph,
so in this phase we keep the vertices of
$\C_1 \cup \cdots \cup \C_m$, but the colors assigned
to those vertices are dummies.

Using Theorem \ref{martConsequence} with 
Theorem \ref{spreadLLL}, we define a spread
partial coloring $\sigma$ on some $T \subseteq V$.
For the remaining
graph $G[V \setminus T]$ and
lists $S_v = \Gamma \setminus \sigma(T \cap N_v)$,
we have (for some small positive $\vartheta'$):
\begin{itemize}
\item for all $v \in V \setminus T$, 
$d_{G[V \setminus T]}(v) = (1 - e^{-1} \pm \vartheta'/3)D$, while
\item for all $v \in V^* \setminus T$,
$$|S_v| \geq (1-e^{-1} + 2\vartheta'/3)D.$$
\end{itemize}
We extend $\sigma$ to 
a spread coloring of
all of $V^* \cup T$ using Proposition \ref{slackSpread}.

\subsection{Clusters}
We now throw out $\sigma|_{V \setminus V^*}$
and spread-color each cluster $\C_i$ assuming a worst-case
labeling $\sigma$ of $V \setminus \C_i$.
Each application of Fact \ref{spreadTogether}
introduces a factor of $2$ to our spread bound,
so we cannot rely on Fact \ref{spreadTogether}
to color $m$-many clusters ($m$ could be $\omega(1)$).
We instead rely on:
\begin{fact}\label{spreadApart}
Let $X$ and $Y$ be disjoint sets.
Suppose $S$ is a $p$-spread subset of $X$,
and given any possible value $S_0$ of $S$,
and $T_{S_0} = (T | \{S = S_0\})$ is a $q$-spread subset of $Y$. 
Then $S \cup T_S$ is a $\max\{p,q\}$-spread subset of $X\cup Y$.
\end{fact}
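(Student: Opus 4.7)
The plan is to exploit the disjointness of $X$ and $Y$ in a direct way. Given any target $U \subseteq X \cup Y$, I would decompose $U = U_X \sqcup U_Y$ where $U_X = U \cap X$ and $U_Y = U \cap Y$. Because $S \subseteq X$ and $T_S \subseteq Y$, the event $\{U \subseteq S \cup T_S\}$ coincides exactly with $\{U_X \subseteq S\} \cap \{U_Y \subseteq T_S\}$, with no ambiguity about which side of the union absorbs which element of $U$.

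From there the computation is a single line: condition on the value $S_0$ of $S$, apply $q$-spread of $T_{S_0}$ to bound $\P(U_Y \subseteq T_{S_0}) \leq q^{|U_Y|}$ uniformly in $S_0$, then sum over $S_0$ with $U_X \subseteq S_0$ and apply $p$-spread of $S$ to get $\P(U_X \subseteq S) \leq p^{|U_X|}$. This produces $p^{|U_X|} q^{|U_Y|} \leq (\max\{p,q\})^{|U|}$, as required.

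There isn't really a hard step; the interest of the statement lies in the contrast with Fact \ref{spreadTogether}. In that earlier fact one must sum over all $V \subseteq U$, corresponding to the many possible ways $U$ might be split between $S$ and $T_S$ when these are allowed to overlap, and this is precisely where the factor $2^{|U|}$ enters. The disjointness hypothesis here collapses that sum to a single term $V = U_X$, which is exactly why the bound improves from $2 \max\{p,q\}$ to $\max\{p,q\}$. I would present the proof in parallel to the proof of Fact \ref{spreadTogether} so the reader can see this difference at a glance.
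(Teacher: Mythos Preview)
Your proposal is correct and matches the paper's own proof essentially line for line: the paper sets $V = U \cap X$ (your $U_X$), bounds $\P(U \subseteq S \cup T_S)$ by $\P(V \subseteq S) \cdot \max_{S_0 \supseteq V} \P(U \setminus V \subseteq T_{S_0})$, and then applies the two spread hypotheses exactly as you describe. Your remark on why the factor of $2$ from Fact~\ref{spreadTogether} disappears is accurate and a helpful addition.
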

\begin{proof}
Let $U \subseteq X \cup Y$ and $V = U \cap X.$
Then
\begin{align*}
\P(U \subseteq S \cup T_S) &\leq \P(V \subseteq S) 
\max_{\substack{S_0 \subseteq X \text{ s.t.} \\ V \subseteq S_0}}
\P(U \setminus V \subseteq T_{S_0}) \\
&\leq p^{|V|} q^{|U \setminus V|} \leq (\max\{p,q\})^{|U|}
\end{align*}
\end{proof}
Using Fact \ref{spreadApart}, in order to extend
$\sigma$ to $\C_1 \cup \cdots \cup \C_m$, it suffices
to form an $O(1/D)$-spread coloring $\tau_i$ of each $\C_i$
\emph{given any coloring $\sigma_i$
of $V \setminus \C_i$} (where the implied
constant in the $O(\cdot)$ is uniformly bounded
over the $i \in [m]$).

As in \cite{KK1}, our treatment depends on
the parameter
$$\zeta = e(\overline{G}[\C])/D^2.$$
Let $B$ be the bigraph on $(\C,\Gamma)$
with $v \sim_B \gamma$ iff $\gamma \not\in \sigma(N_v \setminus \C).$
When $\zeta$ is sufficiently small,
we color $\C$ by way of a spread $\C$-perfect
matching in $B$. The matching is constructed in two
steps: first, a small number of vertices are matched
in a greedy random fashion to `unpopular' colors $\gamma \in \Gamma$
(those with $d_B(\gamma)$ too small).
We then show that the leftover vertices
can be spread-matched to remaining colors
by way of Corollary \ref{highDegSpreadMatch}.

For $\zeta$ quite large, it may not be possible
to match $\C$ to $\Gamma$. In this case, we first
extend $\sigma$ to a spread partial coloring
of some subset $U \subseteq \C$, wherein 
\emph{pairs} $u \not\sim_G w$ are assigned 
a common color
$\sigma(u) = \sigma(w)$. 

The leftover
vertices $\C'$ are then spread-matched to
leftover colors $\Gamma'$. Again, the matching is
constructed in two phases, the first of which
matches a subset of $\C'$ to low-$B$-degree colors,
and the second of which appeals to Corollary
\ref{highDegSpreadMatch}.

\section{Sparse Vertices}\label{Sparse}
For all $v \in V$, draw $\tau_v \in \Gamma$ uniformly at random.
Define $T$ and, for all $v \in V^*$, define $P_v$ and $A_v$ as in
Theorem \ref{martConsequence} and above.
Let $\tilde{\tau}$ be $\tau$
\emph{conditioned on}
$\cap_{v \in V^*} \overline{A_v}$.

The random labeling $\tilde{\tau}$, which
forms a (proper) coloring when restricted to
$T$, is spread by Theorem \ref{spreadLLL}.
For $v \in V^*$, let $W_v = \{w \in V : \text{dist}_G(w,v) \leq 2\}.$
The event $A_v$ is determined
by $(\tau_w : w \in W_v)$, 
so we pick the Lov\'asz graph $\Lambda = G^4[V^*]$.
That is, $\forall v, w \in V^*, v \sim_\Lambda w$ iff $\text{dist}_G(v,w) \in \{1,2,3,4\}$.
We take $p = e^{-D^{1-o(1)}}$ the bound from
Theorem \ref{martConsequence}. Then
$$4 p \Delta_\Lambda \leq 4 e^{-D^{1-o(1)}} D^4 = o(1),$$
so Theorem \ref{spreadLLL} applies to $\tilde{\tau}$.

Let $(v_1, \gamma_1), \cdots, (v_k, \gamma_k) \in V^* \times \Gamma$.
Let $S = \{v_1, \cdots, v_k\}$ and let $N$ be the number
of $v \in V^*$ such that $S \cap W_v \neq \emptyset.$
Then $N \leq kD^2$. Therefore
\begin{align}
\label{spreadLLLApp}\P(\forall i \in [k], \tilde{\tau}(v_i) = \gamma_i)
&\leq \P(\forall i \in [k], \tau(v_i) = \gamma_i) \exp(6pN) \\
\nonumber &\leq (D+1)^{-k} \exp(k/e^{D-o(D)}) \\
\nonumber &= O(1/D)^k,
\end{align}
where (\ref{spreadLLLApp}) is by Theorem \ref{spreadLLL}.

We let $\sigma|_T = \tilde{\tau}|_T$ and proceed to color $V^* \setminus T$.
Let $G' = G[V^* \setminus T]$
and $\forall v \in V^* \setminus T,
S_v = \Gamma \setminus \sigma(T \cap N_v)$. 
For any $v \in V^* \setminus T$, because $A_v$ holds,
$$d_{G'}(v) \leq |N_v \setminus T| \leq (1 - e^{-1} + \vartheta'/3)D.$$
Meanwhile,
\begin{align*}
|S_v| &\geq D+1 - |N_v \cap T| + |P_v| \\
&\geq (1 - e^{-1} + 2\vartheta'/3)D.
\end{align*}
We apply Proposition \ref{slackSpread} (\ref{sSMaxDegree})
to $G'$ with
$$\Delta_{G'} = (1 - e^{-1} \pm \vartheta'/3)D, \ \ 
\lambda > \vartheta'/3$$
to get a $\frac{1}{\lambda \Delta_{G'}} = O(\frac{1}{\vartheta D})$-spread
distribution 
$\sigma|_{V^* \setminus T}$
on $S$-colorings of $G'$.
The union
$$\left(\sigma|_T\right) \cup \left(\sigma|_{V^* \setminus T}\right)$$
is $O(1/\vartheta D)$-spread by Fact \ref{spreadTogether}.
This completes the construction of the spread
coloring $\sigma$ of $V^*$.
As promised, we now drop $\sigma|_{T \setminus V^*}.$

\subsection{Spread Reed-Sudakov}
An earlier version of our argument for sparse
vertices would have relied on the following,
which, though no longer needed here, may
be of independent interest.
\begin{conj}\label{sparseRS}
There is a universal constant $C$
such that for any $\varrho>0$
and $t \geq t_{\varrho}$,
the following holds. Let $G$ be a graph
with lists $(S_v : v \in V(G))$,
and define $d_\gamma(v) := |\{w \in N_v : \gamma \in S_w\}|.$
If
\begin{itemize}
\item $|S_v| \geq (1 + \varrho)t$ $\ (\forall v)$ and
\item $d_\gamma(v) \leq t$ $\ (\forall v, \gamma \in S_v)$,
\end{itemize}
then there is a $C/t$-spread distribution on $S$-colorings of $G$.
\end{conj}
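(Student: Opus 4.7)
My plan is a multi-round wasteful-coloring (nibble) procedure, combined across rounds using Fact \ref{spreadApart} rather than Fact \ref{spreadTogether}, so that the spread parameter does not accumulate factors of two per round. In round $r$, each still-uncolored $v$ activates independently with probability $p_r$, each active $v$ picks a tentative $\sigma_r(v) \in S_v^{(r)}$ uniformly from its current list, and retains this color iff no active neighbor picked the same color. With $t_r$ an upper bound on both $|S_v^{(r)}|$ and $d_\gamma^{(r)}(v)$ propagated as an invariant (so $t_1 = t$), I take $p_1 = 1$ and $p_r = c\, t_r / t$ for $r \geq 2$, where $c$ is a small absolute constant. This makes the per-round per-pair probability $p_r / |S_v^{(r)}| \leq c / ((1+\varrho) t)$ uniformly in $r$. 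Since vertices colored in different rounds are disjoint, Fact \ref{spreadApart} combines the per-round partial colorings into an $O(1/t)$-spread coloring, provided each round's contribution is itself $O(1/t)$-spread.

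Within each round, I analyse residual lists and color-degrees as in Reed's one-round nibble analysis: the expected values of $|S_v^{(r+1)}|$ and $d_\gamma^{(r+1)}(v)$ shrink by a factor $1 - p_r/(1+\varrho) + o(1)$, preserving the slack ratio $1+\varrho$, and Talagrand-type concentration gives deviation probability $\exp(-\Omega(t_r))$. To preserve spread under the conditioning on these concentration events, I apply Theorem \ref{spreadLLL} with dependency graph $\Lambda = G^4$---each concentration event at $v$ depends only on the activation/color choices at graph-distance $\leq 2$ from $v$, so $\Delta_\Lambda = O(t_r^2)$, and the spread-LLL hypothesis $4 p \Delta_\Lambda \leq 1$ holds as long as $t_r$ exceeds a threshold depending on $\varrho$.

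Solving $t_{r+1} \approx (1 - p_r/(1+\varrho)) t_r$ with $p_r = c t_r / t$ gives $t_r \asymp t / r$ and $\P(v \text{ uncolored after round } r) \asymp r^{-(1+\varrho)}$. After $R = \Theta(t)$ rounds the residual lists drop to $O_\varrho(1)$ size, and any remaining uncolored vertices---a vanishing fraction with high probability---can be handled with a final application of Proposition \ref{slackSpread}, absorbed via one application of Fact \ref{spreadTogether}. Summing per-pair probabilities across rounds gives $\P(\sigma(v) = \gamma) \leq (c/t) \cdot \sum_{r \geq 1} r^{-(1+\varrho)} = O(1/(\varrho t))$, and a careful treatment of the dependencies between the ``$v_i$ uncolored before round $r_i$'' events extends this to $\P(U \subseteq \sigma) \leq (O(1/(\varrho t)))^{|U|}$.

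The main obstacle is the conjecture's claim of a \emph{universal} (not $\varrho$-dependent) spread constant $C$: my analysis produces a spread constant of order $1/\varrho$ through the sum $\sum_r r^{-(1+\varrho)}$, which diverges as $\varrho \to 0$. Absorbing this $\varrho$-dependence into the threshold $t_\varrho$ rather than the output constant will likely require a fundamentally different argument---perhaps one exploiting the \emph{per-color} structure of the hypotheses more finely than the global nibble, or one replacing the nibble with a direct fractional-cover calculation feeding into Theorem \ref{pv}. A secondary but routine difficulty is justifying the factorization of $\P(U \subseteq \sigma)$ across vertex-color pairs in $U$ given the shared activation randomness across rounds; I expect this to yield to an inductive application of spread-LLL round by round.
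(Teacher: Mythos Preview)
The paper does not prove this statement: it is explicitly a \emph{conjecture} (Conjecture~\ref{sparseRS}), introduced as something ``which, though no longer needed here, may be of independent interest.'' There is no proof in the paper to compare against.

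On the merits of your proposal: you have correctly located the decisive gap yourself. Your nibble-plus-spread-LLL analysis, even granting every technical step, yields a spread parameter of order $1/(\varrho t)$, not $C/t$ with $C$ universal. The conjecture places the $\varrho$-dependence entirely in the threshold $t_\varrho$, and nothing in your scheme transfers the blow-up from the output constant to the threshold; the divergence of $\sum_r r^{-(1+\varrho)}$ as $\varrho \to 0$ is intrinsic to round-by-round accounting of the single-pair probability. So as written this is a sketch toward a \emph{weaker} statement, not the conjecture.

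Two further technical points deserve attention before even the weaker statement goes through. First, Fact~\ref{spreadApart} requires \emph{deterministic} disjoint ground sets $X$ and $Y$; the set of vertices colored in round $r$ is random, so you cannot invoke it directly across rounds. What you actually need is the union-bound-over-round-assignments calculation you allude to, and the ``routine'' factorization there is not routine---the events ``$v_i$ uncolored through round $r_i-1$ and then assigned $\gamma_i$'' share all the activation and tentative-color randomness of earlier rounds. Second, the hypotheses bound only the color-degrees $d_\gamma(v) \le t$, not the graph degree $d(v)$, which may be far larger than $t$; your claim $\Delta_\Lambda = O(t_r^2)$ for $\Lambda = G^4$ therefore has no support, and the spread-LLL hypothesis $4p\Delta_\Lambda \le 1$ is not available in general. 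Any LLL-style argument here must use an asymmetric dependency structure keyed to color-degrees, as in the original Reed--Sudakov proof, and it is not clear that Theorem~\ref{spreadLLL} in its stated symmetric form suffices.
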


\section{Clusters}\label{Clusters}
Let $\C$ be a cluster in $G$: that is,
for some $\varepsilon \ll 1,$ we have,
$\forall v \in \C$,
\begin{align}\label{clusterLowOutDegree}
|N_v \setminus \C| < \varepsilon D
\end{align}
and
\begin{align}\label{clusterLowNonDegree}
|\C \setminus N_v| < \varepsilon D.
\end{align}
For any function $\sigma|_{V \setminus \C}$
taking values in $\Gamma$, we construct a
spread proper $(D+1)$-coloring $\sigma|_\C$ 
such that $\forall v \in \C, \forall w \sim v,
\sigma(v) \neq \sigma(w).$

Let $H = \overline{G}[\C]$, and define
$\zeta = e(H)/D^2$. Let
$$\varepsilon \ll \zeta_0 D \ll 1,$$
and say $\zeta$ is \emph{large} if
$\zeta \geq \zeta_0$ (and \emph{small}
otherwise). 
For small $\zeta$, we go directly to
\S \ref{Matching}.
When $\zeta$ is large, we begin
with the following process assigning
colors to pairs in $H$.

\subsection{Process}\label{Process}
Let $B$ be the bigraph on $(\C, \Gamma)$
with $v \sim_B \gamma$ iff $\gamma \not\in \sigma(N_v)$,
i.e. if $\gamma$ is a legal color at $v$.
We now form a spread \emph{partial} coloring $\pi$ of $\C$
such that $\forall \gamma \in \text{range}(\pi),$
$|\pi^{-1}(\gamma)| = 2$.

Pick $\eta$ such that
\begin{align}\label{defEta}
\frac{1}{D}, \zeta \ll \eta \ll \zeta/\varepsilon, 1.
\end{align}
Having honed in on one cluster $\C$, we recycle the
notation $\C_i$ for this process: let $\C_0 = \C,
H_0 = H,$ and $\Gamma_0 = \Gamma$.
For $i = 1,2, \cdots, \eta D$,
\begin{enumerate}[(I)]
\item \label{pickEdge} Pick $u_i v_i  \in E(H_{i-1})$ uniformly at random.
\item \label{pickColor} Pick $\gamma_i \in N_B(u_i) \cap N_B(v_i) \cap \Gamma_{i-1}$
uniformly at random.
\item \label{update} Set $\pi(u_i) = \pi(v_i) = \gamma_i$.
Let $\C_i = \C_{i-1} \setminus \{u_i,v_i\},$
$H_i = H[\C_i]$, and $\Gamma_i = \Gamma_{i-1} \setminus \{\gamma_i\}.$ 
\end{enumerate}

For any $i \in [\eta D]$,
\begin{align}\label{processManyEdges}
e(H_{i-1}) > (\zeta - 2 \eta \varepsilon)D^2
\end{align}
and for any $uv \in E(H_{i-1})$,
\begin{align}\label{processManyColors}
|N_B(u) \cap N_B(v) \cap \Gamma_{i-1}| > (1 - 2\varepsilon - \eta)D.
\end{align}
For (\ref{processManyEdges}):
fewer than $2\eta D$ vertices have been
removed in the first $i-1 < \eta D$ rounds of the process,
and each such vertex has $H$-degree 
less than $\varepsilon D$ by (\ref{clusterLowNonDegree}).
For (\ref{processManyColors}):
$|\Gamma \setminus N_B(v)| < \varepsilon D$
by (\ref{clusterLowOutDegree}), and at-most $i-1 < \eta D$
colors have been removed in the preceding rounds
of the Process.

Let $k, l \in \mathbb{N}$ such that $k+2l \leq \eta D$
and $\{(w_1, \phi_1), \cdots, (w_k, \phi_k),$
$(x_1, \phi_{k+1}), (y_1, \phi_{k+1}),$
$\cdots (x_l, \phi_{k+l}),$
$(y_l, \phi_{k+l})\} = \tau \subseteq \C \times \Gamma$. Assume
$w_1, \cdots, w_k, x_1, \cdots, x_l, y_1, \cdots, y_l$
are pairwise distinct, as are $\phi_1, \cdots \phi_{k+l}$.

By (\ref{processManyColors}), 
for any $j \in [k]$ and
given \emph{any}
history $\mathcal{H}$ in the rounds preceding
the selection of $w_j$,
$$\P(\pi(w_j) = \phi_j | \mathcal{H}) \leq \frac{1}{(1 - 2 \varepsilon - \eta)D}.$$
By (\ref{processManyEdges}) and (\ref{processManyColors}),
for any $j \in [l]$,
\begin{align}
\nonumber \P(\pi(x_j) = \pi(y_j) = \phi_{k+j}) 
&\leq \P(\exists i \in [\eta D] \text{ s.t. } u_i v_i = x_j y_j)/
((1 - 2\varepsilon - \eta)D) \\
\nonumber &\leq \frac{\eta D}{(\zeta - 2 \eta \varepsilon) 
D^2 (1 - 2 \varepsilon - \eta) D}
\ll \frac{1}{\varepsilon D^2}.
\end{align}
Therefore $\pi$ is spread:
\begin{align*}
\P(\pi \supseteq \tau) &\leq \paren{\frac{1}{(1-2\varepsilon -\eta)D}}^k
\paren{\frac{1}{\varepsilon D^2} }^l \\
&\leq (1/\sqrt{\varepsilon}D)^{k+2\ell}.
\end{align*} 

We now let $\C' = \C \setminus \text{domain}(\pi)$ and
$\Gamma' = \Gamma \setminus \text{range}(\pi)$.

\subsection{Matching}\label{Matching}

\begin{lem}\label{spreadMatch}
Let $0 < z \ll 1$.
Let $B$ be a bigraph on $(X,Y)$ with
$|X|=J$, $|Y| = J+R$, with $0 \leq R \leq zJ$.
Suppose that
\begin{align*}
\forall x \in X, d_B(x) \geq J - r_x
\end{align*}
and
\begin{align*}
\max_{x \in X} (r_x) \leq zJ, \quad \sum_{x \in X}r_x \leq zJ
\end{align*}
($r_x \leq 0$ is allowed).
There is an $O_{J \to \infty}(\frac{1}{\sqrt{z} J})$-spread
$X$-perfect matching in $B$.
\end{lem}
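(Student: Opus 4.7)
The plan is to reduce to the high-minimum-degree, balanced case handled by Corollary \ref{highDegSpreadMatch} by first dealing with low-degree vertices in $Y$. First, from $\sum_x r_x \leq zJ$ I get $e(B) \geq J^2 - zJ$, so
\[
\sum_{y \in Y} (J - d_B(y)) = J|Y| - e(B) \leq JR + zJ \leq 2zJ^2.
\]
Thus $|\{y : d_B(y) < (1-\alpha)J\}| \leq 2zJ/\alpha$ for every $\alpha > 0$. With $\alpha = 10\sqrt{z}$, the ``bad'' set $Y_b := \{y \in Y : d_B(y) < (1-10\sqrt{z})J\}$ has $|Y_b| = O(\sqrt{z}J)$; with $\alpha = 1$, the number of isolated $y \in Y$ is at most $R + z < R + 1$, hence at most $R$.

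In the easy case $|Y_b| \leq R$, I would deterministically discard $Y_b$ together with any additional $R - |Y_b|$ vertices of $Y \setminus Y_b$ to get a balanced subgraph $B[X, Y^*]$ with $|Y^*| = J$ and minimum degree $\geq (1 - O(\sqrt{z}))J$ on both sides (the $X$-side loses at most $R$ neighbors to trimming; the $Y^*$-side retains $d_B(y) \geq (1-10\sqrt{z})J$). Corollary \ref{highDegSpreadMatch} then yields an $O(1/J)$-spread $X$-perfect matching in $B$, which in particular satisfies the desired $O(1/(\sqrt{z}J))$ bound.

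In the harder case $|Y_b| > R$, at least $k := |Y_b| - R$ vertices of $Y_b$ must be in the matching, and I would use a two-stage construction. In Stage~1, I sort $Y_b$ by degree, designate the $R$ lowest-degree as excluded, and match the top-$k$ remainder $Y_b^*$ by sampling an injection $\phi : Y_b^* \to X$ from independent uniform draws $\phi(y) \in N_B(y)$ conditioned on injectivity; invoking Theorem \ref{spreadLLL} with the dependency graph on $Y_b^*$ (whose maximum degree is at most $|Y_b^*| = O(\sqrt{z}J)$), I bound the conditional edge probability by $O(1/d_B(y))$ up to a bounded LLL factor, since $p = O(1/(\sqrt{z}J))$ gives $p\Delta_\Lambda = O(1)$. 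In Stage~2, I apply Corollary \ref{highDegSpreadMatch} to the subgraph obtained by removing the $\phi$-image from $X$ and trimming $Y \setminus Y_b$ down to size $J - k$; both parts then have size $J - k$ with minimum degree $\geq (1 - O(\sqrt{z}))(J-k)$, yielding an $O(1/J)$-spread matching. Combining the two stages via Fact \ref{spreadTogether} gives the required $O(1/(\sqrt{z}J))$-spread $X$-perfect matching in $B$.

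The main obstacle will be securing Stage~1's spread, for which I need each retained $y \in Y_b^*$ to have $d_B(y) \geq \Omega(\sqrt{z}J)$. An averaging bound gives $d_{R+1} \geq J(1-z)/(R+1)$, which is comfortably above $\sqrt{z}J$ when $R = O(1/\sqrt{z})$. For larger $R$, deterministic exclusion of the $R$ lowest-degree bad vertices may leave residual very-low-degree $y$'s that must be matched; I would handle these by randomizing which $R$ low-degree vertices to exclude from among a slightly larger pool, arranging that each very-low-degree $y$ is matched only with probability $O(d_B(y)/(\sqrt{z}J))$ and thereby absorbing the large $1/d_B(y)$ factor into the target spread bound. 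Verifying this accounting, which uses the slack $|Y_b| - R = O(\sqrt{z}J)$ together with the integer nature of the excess over $R$, is the main technical task.
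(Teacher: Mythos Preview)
Your two-case split and Stage 2 match the paper's proof; the gap is in Stage 1. Sampling $\phi(y)$ uniformly from $N_B(y)$ and conditioning on injectivity via Theorem \ref{spreadLLL} needs $d_B(y) = \Omega(\sqrt{z}J)$ for every retained $y$: otherwise the per-edge probability $1/d_B(y)$ exceeds the target $O(1/(\sqrt{z}J))$, and the pairwise collision probabilities $\P(\phi(y)=\phi(y'))$ can be too large for the hypothesis $4p\Delta_\Lambda \leq 1$. You see this and propose to randomize which $R$ bad vertices get excluded so that each very-low-degree $y$ is matched only with probability $O(d_B(y)/(\sqrt{z}J))$. But this remains a sketch: no distribution on size-$R$ exclusion sets with the required marginals is exhibited, no argument is given that the resulting partial matching is spread for \emph{sets} of edges (the single-edge accounting does not automatically tensorize here), and conditionally on any realized exclusion the retained set may still contain small-degree $y$'s, so the LLL step as written still breaks. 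The ``main technical task'' you flag at the end is essentially the whole difficulty, and it is not resolved.

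The paper sidesteps all of this by sampling \emph{edges} rather than neighbors. Its Stage 1 picks, $r$ times, a uniformly random edge from the current $B[X_i,\U_i]$ and deletes both endpoints. A short counting argument (Claim \ref{loseFew} and the inequality following it) shows $e(B[X_i,\U_i]) \geq r\delta J/2$ throughout, so any fixed edge is chosen in any fixed round with probability at most $2/(r\delta J)$; summing over the $r^k$ ways to assign rounds to $k$ target edges gives the $(2/(\delta J))^k$ spread bound directly, with no LLL. Uniform-edge sampling automatically down-weights low-degree $y \in \U$ in exactly the right proportion, which is precisely what your randomized-exclusion idea is reaching for.
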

\begin{proof}
Since $z$ is fixed and $J \to \infty$,
we assume wherever necessary that $J$ is
large enough to support our claims.
Let $\delta = 5 \sqrt{z} \ll 1$ and define the `unpopular' colors 
\begin{align*}
\U = \{y \in Y : d_B(y) < (1-\delta)J\}. 
\end{align*}
We have
\begin{align*}
|\U|(1-\delta)J + (J+R-|\U|)J &\geq \sum_{x \in X} d_B(x) \geq J^2 - zJ,
\end{align*}
so
\begin{align}
\label{Usmall} |\U| \leq R/\delta + \delta < \frac{\delta J}{20}.
\end{align}
Let 
\begin{align}\label{defr}
|\U| = R + r
\end{align}
If $r \leq 0$, we go straight to
\emph{Matching with High-Degree Colors}. Until then,
assume $r>0$.

We have
\begin{align*}
e(B[X,Y \setminus \U]) \leq J(J-r),
\end{align*}
so
\begin{align}
\nonumber e(B[X,\U]) &\geq \paren{\sum_{x \in X} d_B(x) } - J(J-r) \\
\label{totalUnpopEdges} &\geq J(r-z).
\end{align}
We first form a spread matching
of size $r$
in $B[X,\U]$ by the following
simple algorithm.
Let $X_0 = X, \U_0 = \U$, $M_0 = \emptyset$ and for $i = 1,2, \cdots, r$,
\begin{enumerate}[(I)]
\item Pick $x_i y_i \in B[X_{i-1}, \U_{i-1}]$ uniformly at random
and add $x_i y_i$ to the matching: $M_i = M_{i-1} \cup \{x_i y_i\}$.
\item Let $X_i = X_{i-1} \setminus \{x_i\}$, $\U_i = \U_{i-1} \setminus \{y_i\}$.
\end{enumerate}
Let $M = M_r$.

\begin{clm}\label{loseFew}
For all $i \in [r],$ $e(B[X_i, \U_i]) \geq e(B[X_{i-1}, \U_{i-1}]) 
- |\U| - (1-\delta)J$
\end{clm}
\begin{proof}
\renewcommand{\qedsymbol}{$\square_1$}
The vertex $x_i$ has at most $|\U|$ neighbors in $\U_{i-1}$
and $y_i$ has at most $(1-\delta)J$ neighbors in $X_{i-1}$.
Removing all these edges, the claim follows.
\end{proof}

It follows that
for all $i \in \{0,1,\cdots,r\}$,
\begin{align}
\nonumber e(B[X_i, \U_i]) &\geq e(B[X_r, \U_r]) \\
\label{totalUnpopEdgesApp} &\geq J(r-z) - r(|\U| + (1-\delta)J) \\
\label{UsmallApp} &\geq J \left[
(r-z) - r\paren{1-\frac{2\delta}{3}}
\right] \geq \frac{r\delta J}{2}.
\end{align}
Here (\ref{totalUnpopEdgesApp}) is by (\ref{totalUnpopEdges})
and Claim \ref{loseFew}. Inequality (\ref{UsmallApp})
follows from (\ref{Usmall}):
$$(1-\delta)J + |\U| \leq (1 - \delta) J + \frac{\delta J}{20} < (1-2\delta/3)J.$$

\begin{clm}
$M$ is $(\frac{2}{\delta J})$-spread.
\end{clm}
\begin{proof}
\renewcommand{\qedsymbol}{$\square_2$}
Let $k \leq r$.
Let $u_1, \cdots, u_k \in X$, and $v_1, \cdots, v_k \in \U$.
Then
\begin{align}
\nonumber
\P\left[\{u_1 v_1, \cdots, u_k v_k\} \subseteq M \right] 
& \leq 
\sum_{\overset{\to}{i} \in [r]^k} \P\left[\forall j \in [k], u_j v_j \text{ picked in round } i_j\right]
\\
\label{loseFewApp}
& \leq
r^k \paren{ \frac{2}{\delta r J} }^k
=
\paren{\frac{2}{\delta J}}^k
,
\end{align}
showing that $M$ is $\frac{2}{\delta J} = O(\frac{1}{\sqrt{z}J})$-spread.
The first inequality in (\ref{loseFewApp}) follows from
(\ref{UsmallApp}).
\end{proof}

\textbf{Matching with High-Degree Colors.}
If $r > 0$, the vertices of $X_r$,
and if $r \leq 0$, \emph{all of} $X$, is yet to be matched.  
(See (\ref{defr}) for $r$.)
We now rectify this by
applying Corollary \ref{highDegSpreadMatch} in
what's left of $B$ (after removing $M$
and $\U$ if $r>0$, or after removing the $R$ least-popular
colors if $r \leq 0$).
Order $Y = y_1, y_2, \cdots, y_{J+R}$, where
$d_B(y_1) \geq \cdots \geq d_B(y_{J+R}).$ Let
\begin{align*}
V_0 = \begin{cases}
X_r \quad \text{ if } r>0 \\
X \quad \text{ if } r \leq 0,
\end{cases}
\end{align*}
and
\begin{align*}
V_1 = \begin{cases} 
Y \setminus \U \quad \text{ if } r>0 \\
\{y_1, \cdots, y_J\} \text{ if } r \leq 0.
\end{cases}
\end{align*}
Define
$$I = |V_0| = |V_1| = J-\max\{r,0\}.$$
We take $F = B[V_0, V_1]$.
Set $\lambda = 2\delta \ll 1$. For every $v \in V_0 \cup V_1,$
$$d_F(v) \geq (1-\lambda)I.$$
Indeed, for $x \in V_0$,
\begin{align*}
d_F(x) &\geq J - r_x - \max\{|\U|,R\} \\
&\geq (1-z-\delta/20)J \geq (1-\lambda)I.
\end{align*}
For $y \in V_1,$
\begin{align*}
d_F(y) &\geq (1-\delta)J - \max\{r,0\} \\
&\geq (1-\delta)J - |\U| \geq (1 - \lambda)I.
\end{align*}
Corollary \ref{highDegSpreadMatch} now finds an
$O(1/I) < O(\frac{1}{\sqrt{z}J})$-spread perfect matching
$M'$ in $F$. The desired $O(\frac{1}{\sqrt{z}J})$-spread 
$X$-perfect matching in $B$
is $M \cup M'$ if $r>0$, and just $M'$ if $r\leq 0$.
($M \cup M'$ is $O(\frac{1}{\sqrt{z}J})$-spread
as well, by one
application of Fact \ref{spreadTogether} or \ref{spreadApart}.)
\end{proof}

\textbf{Small $\zeta$.}
We apply Lemma \ref{spreadMatch}
with $X = \C, Y = \Gamma,$ and
$$v \sim_B \gamma \iff \gamma \in \Gamma \setminus \sigma(N_v).$$
So $J = |\C|$, $R = D+1 - |\C|$, and
for each $x \in \C,$
$$d_B(x) \geq J - d_H(x)$$
(recall $H = \overline{G}[\C]$), so we take $r_x = d_H(x)$.
Let $z = 3(\varepsilon + \zeta D).$
We need to show $R, \max_{x \in \C} r_x$, and $\sum_{x \in \C} r_x$
are all at most $zJ$.
First,
\begin{align*}
R \leq \varepsilon D < zJ
\end{align*}
by (\ref{clusterLowOutDegree}); by (\ref{clusterLowNonDegree}),
\begin{align*}
\max_{x \in \C} r_x \leq \varepsilon D < zJ;
\end{align*}
and $\sum_{x \in X} r_x = 2\zeta D^2 < zJ.$
We have $z \ll 1$ since $\varepsilon \ll 1$ and 
(because $\zeta$ is small)
$\zeta \leq \zeta_0 \ll 1/D$.
The last hypothesis of Lemma \ref{spreadMatch}
we have yet to show is $0 \leq R$:
if $|\C| \geq D+2,$ then $e(H) \geq \frac{D}{2},$
i.e. $\zeta \geq \frac{1}{2D}.$ But $\zeta \leq \zeta_0 \ll \frac{1}{D}$.

So, we can color the clusters $\C$ for which $\zeta$ is small
using our $O_{J \to \infty} (\frac{1}{\sqrt{z}J}) =$ 
$O_{D \to \infty} (\frac{1}{\sqrt{\varepsilon}D})$-spread 
\emph{matching} from $\C$ into $\Gamma$
(i.e., if $v$ is matched to $\gamma$ in the matching,
then we color $v$ with $\gamma$).

\textbf{Large $\zeta$.} We apply Lemma \ref{spreadMatch}
with $X = \C'$, $Y = \Gamma'$, and
$$v \sim_B \gamma \iff \gamma \in \Gamma' \setminus \sigma(N_v)$$ 
(see the end of \S \ref{Process} for $\C'$, $\Gamma'$).
So $J = |\C'| = |\C| - 2\eta D$ and $R = D+1 - |\C| + \eta D$.
Since $G$ is $D$-regular
(see the discussion at the beginning of \S \ref{Outline}), for any $x \in \C'$,
$x$ has $(d_H(x) + D+1 - |\C|)$
neighbors in $V \setminus \C$. Thus
\begin{align*}
d_B(x) &\geq D+1 - \eta D - (d_H(x) + D+1 - |\C|) \\
&= J + \eta D - d_H(x);
\end{align*}
so set $r_x = d_H(x) - \eta D$.
Let $z = 2(\varepsilon + \eta).$ We again need to show
$R, \max_{x \in \C'} r_x,$ and $\sum_{x \in \C'} r_x$
are all at most $zJ$. First,
\begin{align}
\nonumber R &= D+1 - |\C| + \eta D \\
\label{smallR} &< \varepsilon D + \eta D < 2(\varepsilon + \eta) J,
\end{align}
where the first inequality in (\ref{smallR}) follows from (\ref{clusterLowOutDegree})
and the fact that $G$ is $D$-regular.
Next, $\max_{x \in \C'} r_x \leq \max_{x \in \C} d_H(x) \leq \varepsilon D < zJ$
by (\ref{clusterLowNonDegree}); and
$\sum_{x \in \C'} r_x = 2 \zeta D^2 - \eta D J < 0 < zJ$
since $\eta \gg \zeta$ and $J = \Omega(D)$.
Now we show $R \geq 0$. If $|\C| \leq D+1$, this is immediate.
Otherwise, since $|\C| - D - 1 \leq d_H(x)$ for every $x \in \C,$
$2\zeta D^2 = \sum_{x \in \C} d_H(x) \geq |\C|(|\C| - D - 1)$,
so $|\C| - D - 1 < 2 \zeta D.$ Hence $R > \eta D - 2 \zeta D > 0.$
(Recall from (\ref{defEta}) that $\eta \gg \zeta$.)

We thus color the clusters $\C$ with large $\zeta$ as follows.
First, we use the Process in \S \ref{Process} to give an
$O(\frac{1}{\sqrt{\varepsilon D}})$-spread 
coloring $\pi$ to $\C \setminus \C'$.
Then, Lemma \ref{spreadMatch} gives us an $O_{J \to \infty} (\frac{1}{\sqrt{z}J}) =$
$O_{D \to \infty} (\frac{1}{\sqrt{\varepsilon}D})$-spread
\emph{matching} from $\C'$ to $\Gamma'$, with which we color the leftover vertices $\C'$
(assigning color $\gamma$ to $v$ if $v$ is matched to $\gamma$).
\qed

\end{document}